\newtheorem{theorem}{Theorem}[section]
\newtheorem{proposition}[theorem]{Proposition}
\newtheorem{corollary}[theorem]{Corollary}
\theoremstyle{definition}
\newtheorem{example}[theorem]{Example}
\newtheorem{problem}[theorem]{Problem}
\newtheorem{remark}[theorem]{Remark}
\begin{document}
\title[Entropy of $G$-SFTs]{Complexity of Shift Spaces on Semigroups}
\author{Jung-Chao Ban}
\thanks{*To whom correspondence should be addressed}
\address[Jung-Chao Ban]{Department of Applied Mathematics, National Dong Hwa
University, Hualien 970003, Taiwan, R.O.C.}
\email{jcban@gms.ndhu.edu.tw}
\author[Chih-Hung Chang]{Chih-Hung Chang*}
\address[Chih-Hung Chang]{Department of Applied Mathematics, National
University of Kaohsiung, Kaohsiung 81148, Taiwan, R.O.C.}
\email{chchang@nuk.edu.tw}
\author{Yu-Hsiung Huang }
\address[Yu-Hsiung Huang]{Department of Applied Mathematics, National Dong
Hwa University, Hualien 970003, Taiwan, R.O.C.}
\email{algobear@gmail.com}
\date{July 8, 2018}
\thanks{This work is partially supported by the Ministry of Science and
Technology, ROC (Contract No MOST 105-2115-M-259 -006 -MY2 and
105-2115-M-390 -001 -MY2) and National Center for Theoretical Sciences.}

\keywords{Complexity, entropy, nonlinear recurrence equation, non-free semigroups}

\subjclass{Primary 37E05, 11A63}

\maketitle

\begin{abstract}
Let $G=\left\langle S|R_{A}\right\rangle $ be a semigroup with generating set $
S$ and equivalences $R_{A}$ among $S$ determined by a matrix $A$. This paper investigates the complexity of $G$-shift spaces by yielding the topological entropies. After revealing the existence of topological entropy of $G$-shift of finite type ($G$-SFT), the calculation of topological entropy of $G$-SFT is equivalent to solving a system of nonlinear recurrence equations. The complete characterization of topological entropies of $G$-SFTs on two symbols is addressed, which extends [Ban and Chang, arXiv:1803.03082] in which $G$ is a free semigroup.
\end{abstract}

\baselineskip=1.4 \baselineskip

\section{Introduction}

Let $\mathcal{A}$ be a finite alphabet and $G$ be a group. A \emph{%
configuration} is a function $f:G\rightarrow \mathcal{A}$ and a \emph{pattern%
} is a function from a finite subset of $G$ to $\mathcal{A}$. A subset $%
X\subseteq \mathcal{A}^{G}$ is called a \emph{shift space }if $X=X_{\mathcal{%
F}}$ which is a set of configurations which avoid patterns from some set $%
\mathcal{F}$ of patterns. If $\mathcal{F}$ is finite we call such a shift
space $\emph{shift}$ $\emph{of}$ $\emph{finite}$ $\emph{type}$ (SFT). Let $%
n\in \mathbb{N}$ and denote by $E_{n}$ the set of elements in $G$ whose
length are less than or equal to $n$. We define $\Gamma _{n}(X)$ the set of
all possible patterns of $X$ in $E_{n}$ and set $\gamma _{n}=\left\vert
\Gamma _{n}\right\vert $, i.e., the number of $\Gamma _{n}$. The \emph{%
topological entropy }(\emph{entropy }for short) of $X$ is defined as 
\begin{equation}
h(X)=\limsup\limits_{n\rightarrow \infty }\frac{\ln \gamma _{n}}{\left\vert
E_{n}\right\vert }\text{.}  \label{21}
\end{equation}%
It is known that the value (\ref{21}) measures the exponential growth rate
of the number of admissible patterns. From the dynamics viewpoint, it is a
measure of the randomness or complexity of a given physical or dynamical
system (cf. \cite%
{simpson2017symbolic,brin2002introduction,downarowicz2011entropy,ott2002chaos}%
). From the information theory viewpoint, it measures how much information
can be stored in the set of allowed sequences.

The value $h(X)$ (we write it simply $h$) exists for $G=\mathbb{Z}^{1}$
under the classical subadditive argument, and its entropy formula and
algebraic characterization is given by D. Lind \cite{Lind-ETaDS1984,LM-1995}%
; that is, the nonzero entropies of $\mathbb{Z}^{1}$-SFTs are exactly the
non-negative rational multiples of the logarithm of Perron numbers\footnote{%
A \emph{Perron number} is a real algebraic integer greater than $1$ and
greater than the modulus of its algebraic conjugates.}. Generally, if $G$ is 
\emph{amenable} ($\mathbb{Z}^{d}$ is amenable), $h$ exists due to the fact
that $G$ satisfies the F{\o}lner condition \cite%
{coornaert2015topological,kerr2016ergodic,ceccherini2010cellular}. The
characterization of the entropies for $G=\mathbb{Z}^{d}$, $d\geq 2$ is given
by Hochman-Meyerovitch \cite{HM-AoM2010}, namely, the entropies of such $G$%
-SFTs are the set of \emph{right recursively enumerable}\footnote{%
The infimum of a monotonic recursive sequence of rational numbers}\emph{\ }%
numbers. In \cite{BL-DCDS2005}, the authors found the recursive formula of $%
h $ in $\mathbb{Z}^{2}$-SFTs, some explicit value of $h$ can be computed
therein. Finally, the quantity (\ref{21}) can also be used to characterize
the chaotic spatial behavior for $\mathbb{Z}^{2}$ lattice dynamical systems
(cf. \cite{CS-SJAM1995, CM-ITCSIFTaA1995, CMV-IJBCASE1996, BCL+-JDE2009,
BCL-JDE2012}).

For $G=FS_{d}$, the free semigroup with generators $S=\{s_{1},\ldots
,s_{d}\} $ (it is not amenable), the existence of the limit (\ref{21}) is
due to the recent result of Petersen-Salama \cite{petersen2018tree}. Its
entropy formula for $d=2$ and $\left\vert \mathcal{A}\right\vert =2$ is
presented by Ban-Chang \cite{ban2018topological}. For $G=F_{2}$, the free
group with $2$ generators, Piantadosi \cite{piantadosi2008symbolic} finds an
approximation of $h\approx 0.909155$ for the $F_{2}$-\emph{golden mean shift}%
, i.e., the forbidden set $\mathcal{F}\subset \mathcal{A}\times S\times 
\mathcal{A}$ is defined by $\mathcal{F}=\{(2,s_{i},2):i=1,2\}$\footnote{%
This means that the consecutive $22$ is a forbidden pattern along the
generators $s_{1}$ and $s_{2}$.} for $\left\vert \mathcal{A}\right\vert =2$.
Either the existence of the limit of (\ref{21}) or to find the exact value
of $h$ for a $F_{d}$-SFT is an open problem. The specification properties or
the chaotic behavior of $FS_{d}$-SFTs can be found in \cite{ban2017tree(a),
ban2017mixing(b)}

A \emph{semigroup }is a set $G=\left\langle S|R\right\rangle $ together with
a binary operation which is closed and associative, where $R$ is a set of
equivalences which describe the relations among $S$. A \emph{monoid }is a
semigroup with an identity element $e$. Suppose $A\in \{0,1\}^{d\times d}$
is a binary matrix. A semigroup/monoid $G$ of the form $G=\left\langle
S|R_{A}\right\rangle $ means that $s_{i}s_{j}=s_{i}$ if and only if $%
A(i,j)=0 $. We note that $FS_{d}=\left\langle S|R_{A}\right\rangle $, where $%
S=\{s_{1},\ldots ,s_{d}\}$ and $A=\mathbf{E}_{d}$, the $d\times d$ matrix
with all entries being $1^{\prime }$s. The aim of this paper is to find the
entropy formula of $G$-SFTs. Although the discussion works for general cases, we focus on the case where $%
d=\left\vert \mathcal{A}\right\vert =2$ and $A=\left( 
\begin{array}{cc}
1 & 1 \\ 
1 & 0%
\end{array}%
\right) $ for the clarity and compactness of the idea.

In Section 2 we demonstrate that $\gamma _{n}$ solves some nonlinear
recurrence equation with respect to the lattice $G$ and the rules $%
T=(T_{1},T_{2})$ (Theorem \ref{Thm: 5}). Various types of recurrence
equations, namely, the \emph{zero entropy type }(type $\mathbf{Z}$,
Proposition \ref{Prop: 1}), \emph{equal growth type }(type $\mathbf{E}$,%
\textbf{\ }Theorem \ref{Thm: 2}), \emph{dominating type} (type $\mathbf{D}$,%
\textbf{\ }Theorem \ref{Thm: 3} and Theorem \ref{Thm: 7}) and \emph{%
oscillating types }(type $\mathbf{O}$, Proposition \ref{Prop: 3}) are
introduced, and the algorithms of the entropy computation for these types
are presented in Section 3. We give the complete characterization of $h$ of $%
G$-SFTs with $\left\vert \mathcal{A}\right\vert =2$ (Theorem \ref{Thm: 6})
in Section 4. That is, all the nonlinear recurrence equations of $G$-SFTs
with $\left\vert \mathcal{A}\right\vert =2$ are equal to $\mathbf{Z}\cup 
\mathbf{E}\cup \mathbf{D}\cup \mathbf{O}$. Some open problems related to
this topic are listed in Section 5.

\section{Preliminaries}

In this section, we give the notations and some known results of $G$-SFTs.
Let $\mathcal{A}$ be a finite set and $t\in \mathcal{A}^{G}$, for $g\in G$, $%
t_{g}=t(g)$ denotes the label attached to the vertex $g$ of the right \emph{%
Cayley graph}\footnote{%
The \emph{right Cayley graph} of $G$ with respect to $S$ is the directed
graph whose vertex set is $G$ and its set of arcs is given by $%
E=\{(g,gs):g\in G,s\in S\}$} of $G$. The \emph{full shift }$\mathcal{A}^{G}$
collects all configurations, and the \emph{shift map }$\sigma :G\times 
\mathcal{A}^{G}\rightarrow \mathcal{A}^{G}$ is defined as $(\sigma
_{g}t)_{g^{\prime }}=t_{gg^{\prime }}$ for $g,g^{\prime }\in G$. For $n\geq
0 $, let $E_{n}=\{g\in G:\left\vert g\right\vert \leq n\}$ denote the
initial $n$-subgraph of the Cayley graph. An $n$\emph{-block }is a function $%
\tau :E_{n}\rightarrow \mathcal{A}$. A configuration $t$ \emph{accepts }an $%
n $-block $\tau $ if there exists $g\in G$\emph{\ }such that\emph{\ }$%
t_{gg^{\prime }}=\tau _{g^{\prime }}$ for all $g^{\prime }\in E_{n}$;
otherwise, we call $\tau $ a \emph{forbidden block }of $t$ (or $t$ \emph{%
avoids }$\tau $). A $G$-shift space is a set $X\subseteq \mathcal{A}^{G}$ of
all configurations which avoid a set of forbidden blocks. For $n\in \mathbb{N%
}$ and $g\in G$, let $\Gamma _{n}^{[g]}(X)$ be the set of $n$-blocks of $X$
rooted at $g$, i.e., the support of each block of $\Gamma _{n}^{[g]}(X)$ is $%
gE_{n}$. Let $\gamma _{n}^{[g]}=\left\vert \Gamma _{n}^{[g]}(X)\right\vert $%
, the cardinality of $\Gamma _{n}^{[g]}(X)$, and denote $\gamma
_{n}(X)=\gamma _{n}^{[e]}(X)$. The \emph{topological degree }of $X$ is
defined as 
\begin{equation}
\deg (X)=\limsup\limits_{n\rightarrow \infty }\frac{\ln \ln \gamma _{n}(X)}{n%
}.  \label{22}
\end{equation}%
In \cite{ban2017coloring}, the authors show that the limit of (\ref{22})
exists for $G$-SFTs, where $G=\left\langle S|R_{A}\right\rangle $ and $%
A=\left( 
\begin{array}{cc}
1 & 1 \\ 
1 & 0%
\end{array}%
\right) $. The relation of $\deg (X)$ and $h$ is described as follows. For a 
$G$-SFT, $\gamma _{n}$ behaves approximately like $\lambda _{1}\lambda
_{2}^{\kappa ^{n}}$ for some $\kappa ,\lambda _{1},\lambda _{2}\in \mathbb{R}
$ while $\deg (X)=\ln \kappa $. The formulation $\lambda _{1}\lambda
_{2}^{\kappa ^{n}}$ reveals that we could use $\lambda _{2}$ colors (in
average) to \emph{fill up} the elements of $E_{n}$ in $G$. The $\deg (X)$
represents the logarithm of the degree $\kappa $ (in average) of $G$ in the
viewpoint of the graph theory. For examples, let $\mathcal{A}=\{1,2\}$ and $%
X $ be a $\mathbb{Z}^{1}$- full shift over $\mathcal{A}$, then $\gamma
_{n}=2^{n}$ while $\left\vert E_{n}\right\vert =n$ and $\deg (X)=0$. If $X$
is a $FS_{3}$-full shift with $\mathcal{A}$, then $\gamma _{n}=2^{3^{n}}$
while $\left\vert E_{n}\right\vert =3^{n}$ and $\deg (X)=\ln 3$. Thus the
formulation $\lambda _{1}\lambda _{2}^{\kappa ^{n}}$ can also be symbolized
as $\gamma _{n}\approx \left\vert \mathcal{A}\right\vert ^{\left\vert
E_{n}\right\vert }$. It can be easily checked that if $\deg (X)=\ln \rho
_{A} $, where $\rho _{A}$ denotes the the spectral radius of the matrix $A$,
then $h=\ln \lambda _{2}$. However, it is not always the case, we prove that 
$\{\kappa (X_{\mathcal{F}}):X_{\mathcal{F}}$ is a $FS_{d}$-SFT$\}=\{\xi ^{%
\frac{1}{p}}:\xi \in \mathcal{P}$, $p\geq 1\}$ \cite{ban2017tree} for $%
FS_{d} $-SFTs, where $\mathcal{P}$ is the set of Perron numbers. Suppose $%
G=\left\langle S|R_{A}\right\rangle $ has at least one \emph{right free
generator}\footnote{$s_{i}$ is a right (resp. left) free generator if and
only if $A(i,j)=1$ (resp. $A(j,i)=1$) for $1\leq j\leq d$.}, Ban-Chang-Huang 
\cite{ban2018topological1} provide the necessary and sufficient conditions
for $\kappa =\rho _{A}$. Roughly speaking, the more information of the tuple 
$(\kappa ,\lambda _{1},\lambda _{2})\in \mathbb{R}^{3}$ we know, the more
explicit value of $\gamma _{n}$ we obtain. A natural question arises: \emph{%
how to compute the value }$\lambda _{2}$\emph{?} If $G=FS_{2}$, i.e., $A=%
\mathbf{E}_{2}$, the authors give the complete characterization of the
entropies for $\left\vert \mathcal{A}\right\vert =2$ \cite%
{ban2017characterization}. This study is to extend the previous work to $%
A=\left( 
\begin{array}{cc}
1 & 1 \\ 
1 & 0%
\end{array}%
\right) $.

\subsection{The existence of the entropy}

From now on, we assume that $G=\left\langle S|R_{A}\right\rangle $, where $%
S=\{s_{1},s_{2}\}$, $A=$ $\left( 
\begin{array}{cc}
1 & 1 \\ 
1 & 0%
\end{array}%
\right) $ and $\mathcal{A}=\{1,2\}$. Theorem \ref{Thm: 4} below shows that
the limit (\ref{21}) exists for $G$-shifts. The proof is based on the
concept of the proof for $FS_{2}$-shifts \cite{petersen2018tree}. Since the
proof for $G$-shifts is not straightforward, we include the proof for the
convenience of the reader.

\begin{theorem}
\label{Thm: 4}Let $X$ be a $G$-shift. The entropy (\ref{21}) exists.
\end{theorem}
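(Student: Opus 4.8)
The plan is to run the sub-multiplicative argument of Petersen--Salama \cite{petersen2018tree}, the genuinely new feature being that $E_n$ is no longer a complete binary tree. Since $A(2,2)=0$ forces $s_2s_2=s_2$, every vertex ending in $s_2$ carries a self-loop along $s_2$, so the reduced words of length at most $n$ form a Fibonacci tree: if $f_k$ denotes the number of elements of length exactly $k$ then $f_k=f_{k-1}+f_{k-2}$, whence $f_k\asymp\rho^k$ and $|E_n|\asymp\rho^n$ with $\rho=\rho_A=(1+\sqrt{5})/2$. First I would isolate the two shapes of subtree that occur. A vertex that is $e$ or ends in $s_1$ (a \emph{free} vertex) roots, $n$ levels down, a subtree isomorphic to $E_n$; a vertex ending in $s_2$ (a \emph{constrained} vertex) roots a subtree that is a self-loop over a free copy of $E_{n-1}$. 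Writing $\gamma_n^{(2)}$ for the number of admissible $n$-blocks on a constrained subtree, a one-step expansion gives the auxiliary bound $\gamma_n^{(2)}\le|\mathcal{A}|\,\gamma_{n-1}$. A direct count shows that among the length-$m$ vertices there are $a_m=f_{m-1}$ free and $b_m=f_{m-2}$ constrained ones.

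Next I would cut $E_{m+n}$ along the edges joining length $m$ to length $m+1$. An admissible $(m+n)$-block is then its restriction to $E_m$ together with one extension into the depth-$n$ subtree rooted at each length-$m$ vertex, each extension coupled to $E_m$ only through the shared root label (injectivity of this correspondence needs only that restrictions of admissible blocks are admissible, so the estimate holds for arbitrary $G$-shifts, not just SFTs). Bounding each extension by the total number of subtree blocks yields
\[
\gamma_{m+n}\ \le\ \gamma_m\,\gamma_n^{\,a_m}\,\bigl(\gamma_n^{(2)}\bigr)^{\,b_m}.
\]
Taking logarithms, inserting $\gamma_n^{(2)}\le|\mathcal{A}|\gamma_{n-1}$, dividing by $|E_{m+n}|$ and letting $m\to\infty$ with $n$ fixed, the Fibonacci asymptotics collapse this to
\[
L\ \le\ \rho^{-n}L+u_n\beta_n+v_n\beta_{n-1}+o(1),
\]
where $\beta_n=\ln\gamma_n/|E_n|$, $L=\limsup_n\beta_n\le\ln|\mathcal{A}|$, and $u_n=\rho^{-(n+3)}|E_n|$, $v_n=\rho^{-(n+4)}|E_{n-1}|$ satisfy $u_n\to\rho/\sqrt{5}$ and $v_n\to\rho^{-1}/\sqrt{5}$.

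The decisive point is that these weights sum to one, $u_n+v_n\to(\rho+\rho^{-1})/\sqrt{5}=1$, which is exactly the relation $\rho+\rho^{-1}=\sqrt{5}$ read off the characteristic polynomial of $A$. Granting it, I would finish by a squeeze: choosing a subsequence $n_k$ with $\beta_{n_k}\to\liminf_n\beta_n=:\ell$ and passing to $\limsup_k$ in the last display, with $\rho^{-n_k}\to0$ and $\limsup_k\beta_{n_k-1}\le L$, gives $L\le u\ell+vL$ where $u=\rho/\sqrt{5}>0$, $v=\rho^{-1}/\sqrt{5}$ and $u+v=1$; since then $uL\le u\ell$ this forces $L\le\ell$, so $\liminf_n\beta_n=\limsup_n\beta_n$ and the limit (\ref{21}) exists. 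I expect the real obstacle to be precisely this loss of self-similarity: the single sequence $\gamma_n$ no longer closes under the cut, so the constrained count $\gamma_n^{(2)}$ must be carried along and then pinned to $\gamma_{n-1}$, and the squeeze succeeds only because the two limiting weights add up to exactly $1$. A lazier estimate such as $\gamma_n^{(2)}\le\gamma_n$ degrades the constants and yields merely $\liminf_n\beta_n\ge cL$ with $c<1$, so the careful Fibonacci bookkeeping is what makes the bound tight.
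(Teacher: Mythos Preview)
Your argument is correct, but it follows a genuinely different route from the paper's. The paper never separates free from constrained vertices; it simply uses the uniform bound $\gamma_m^{[g]}\le\gamma_m$ for every $g$ with $|g|=a$ (valid because the shifted configuration $\sigma_g t$ restricted to $E_m$ determines the block on $gE_m$), obtaining the single inequality $\gamma_{a+m}\le\gamma_a\,\gamma_m^{\,l_a}$ with $l_a=|L_a|$. It then \emph{iterates} this with a fixed step $m$ chosen so that $\dfrac{\ln\gamma_m}{l_1+\cdots+l_m}<\underline{h}+\varepsilon$: writing $n=pm+q$ one gets $\gamma_n\le\gamma_q\,\gamma_m^{\,l_q+l_{q+m}+\cdots+l_{q+(p-1)m}}$. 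The crucial combinatorial input is the strict super-multiplicativity $l_{r+s}>l_r l_s$ (from $l_{r+s}=l_r l_{s+1}+l_{r-1}l_s$), which yields $l_{q+1}+\cdots+l_{q+pm}>(l_1+\cdots+l_m)(l_q+\cdots+l_{q+(p-1)m})$ and hence $\dfrac{\ln\gamma_n}{|E_n|}<\underline{h}+2\varepsilon$ for all large $n$.

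So the two proofs compensate for the non-free structure in complementary ways. You make a single cut but track the two subtree shapes separately, and the squeeze closes because the limiting weights $u=\rho/\sqrt5$ and $v=\rho^{-1}/\sqrt5$ sum to exactly $1$. The paper makes the crude estimate $\gamma_n^{(2)}\le\gamma_n$ (your ``lazier'' bound) but recovers the loss by iterating many cuts and invoking $l_{r+s}>l_r l_s$. Your final remark is therefore accurate about your own single-cut framework---the lazy bound there gives only $\ell\ge(\sqrt5/\rho^2)L$---but it does not identify the real obstacle globally: the paper shows that the lazy bound \emph{does} suffice once one iterates, with the Fibonacci super-multiplicativity doing the work that your weight identity $u+v=1$ does for you. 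The paper's argument is closer to Petersen--Salama and arguably cleaner; yours is more structural and makes explicit why the golden-ratio growth of $|E_n|$ is exactly what is needed.
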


\begin{proof}
Let $L_{n}=\{g\in G:\left\vert g\right\vert =n\}$ and $l_{n}=\left\vert
L_{n}\right\vert $ for $n\geq 0$. Set 
\begin{equation*}
\underline{h}=\liminf\limits_{n\rightarrow \infty }\frac{\ln \gamma _{n}}{%
\left\vert E_{n}\right\vert }=\liminf\limits_{n\rightarrow \infty }\frac{\ln
\gamma _{n}}{l_{0}+l_{1}+\cdots +l_{n}}.
\end{equation*}%
Since $l_{0}=1$, for $\varepsilon >0$, there exists a large $m\in \mathbb{N}$
such that 
\begin{equation*}
\frac{\ln \gamma _{m}}{l_{1}+\cdots +l_{m}}<\underline{h}+\varepsilon \text{.%
}
\end{equation*}%
Write $n=pm+q$ where $0\leq q\leq m-1$, then we have 
\begin{eqnarray*}
\gamma _{pm+q} &=&\gamma _{\lbrack (p-1)m+q]+m}\leq \gamma _{(p-1)m+q}\gamma
_{m}^{l_{(p-1)m+q}} \\
&\leq &(\gamma _{(p-2)m+q}\gamma _{m}^{l_{(p-2)m+q}})\gamma
_{m}^{l_{(p-1)m+q}} \\
&&\vdots \\
&\leq &\gamma _{q}\gamma _{m}^{l_{q}+l_{m+q}+\cdots +l_{(p-1)m+q}}\text{.}
\end{eqnarray*}%
On the other hand, we have 
\begin{equation*}
\left( 
\begin{array}{c}
l_{n+1} \\ 
l_{n}%
\end{array}%
\right) =A\left( 
\begin{array}{c}
l_{n} \\ 
l_{n-1}%
\end{array}%
\right) \text{.}
\end{equation*}%
It can be easily checked that $l_{m+n}=l_{m}l_{n+1}+l_{m-1}l_{n}$. Thus we
have $l_{m+n}>l_{m}l_{n+1}>l_{m}l_{n}$ and 
\begin{eqnarray*}
l_{q+1}+l_{q+2}+\cdots +l_{q+pm} &>&l_{q}(l_{1}+\cdots
+l_{m})+l_{q+m}(l_{1}+\cdots +l_{m}) \\
&&+\cdots +l_{q+(p-1)m}(l_{1}+\cdots +l_{m}) \\
&=&(l_{1}+\cdots +l_{m})(l_{q}+l_{q+m}+\cdots +l_{q+(p-1)m}).
\end{eqnarray*}%
Therefore, for $n$ large enough we have%
\begin{eqnarray*}
\underline{h} &\leq &\frac{\ln \gamma _{n}}{\left\vert E_{n}\right\vert }=%
\frac{\ln \gamma _{pm+q}}{\left\vert E_{pm+q}\right\vert } \\
&\leq &\frac{\ln \gamma _{q}}{\left\vert E_{pm+q}\right\vert }+\frac{%
l_{q}+l_{q+m}+\cdots +l_{q+(p-1)m}}{l_{1}+l_{2}+\cdots +l_{q+pm}}\ln \gamma
_{m} \\
&\leq &\frac{\ln \gamma _{q}}{\left\vert E_{pm+q}\right\vert }+\frac{%
l_{q}+l_{q+m}+\cdots +l_{q+(p-1)m}}{l_{q+1}+\cdots +l_{q+pm}}\ln \gamma _{m}
\\
&\leq &\frac{\ln \gamma _{q}}{\left\vert E_{pm+q}\right\vert }+\frac{\ln
\gamma _{m}}{l_{1}+\cdots +l_{m}}<\underline{h}+2\varepsilon \text{.}
\end{eqnarray*}%
Thus we conclude that (\ref{21}) exists and equals $\underline{h}$. This
completes the proof.
\end{proof}

\subsection{ Nonlinear recurrence equations}

Let $\mathcal{F}\subseteq \mathcal{A}\times S\times \mathcal{A}$ be a set of
forbidden blocks and $X=X_{\mathcal{F}}$ be the corresponding $G$-SFT. The
associated \emph{adjacency matrices} $T_{1}=(t_{ij}^{1})_{i,j=1}^{2}$ and $%
T_{2}=(t_{ij}^{2})_{i,j=1}^{2}$ of $\mathcal{F}$ are defined as follows. For 
$i=1,2$, $T_{i}(a,b)=0$ if $(a,s_{i},b)\in \mathcal{F}$ and $T_{i}(a,b)=1$,
otherwise. Let $T=(T_{1},T_{2})$, the $G$\emph{-vertex shift} $X_{T}$ is
defined. 
\begin{equation}
X_{T}=\{t\in \mathcal{A}^{G}:T_{i}(t_{g},t_{gs_{i}})=1\text{ }\forall
g,gs_{i}\in G\}\text{.}  \label{7}
\end{equation}%
It is obvious that $X_{T}$ is equal to $X$ and thus $h(X)=h(X_{T})$.
Throughout the paper, we assume that $T_{i}$ has no zero rows for $i=1,2.$

Fix $i\in \mathcal{A}$, we set $\Gamma _{i,n}^{[g]}$ the set which consists
of all $n$-blocks $\tau $ in $\Gamma _{n}^{[g]}$ such that $\tau _{g}=i$ and 
$\gamma _{i,n}^{[g]}=\left\vert \Gamma _{i,n}^{[g]}\right\vert $, the
cardinality of $\Gamma _{i,n}^{[g]}$. For $g\in G$, we define $\mathbf{F}%
_{g}=\{g^{\prime }\in G:gg^{\prime }\in G$ and $\left\vert gg^{\prime
}\right\vert =\left\vert g\right\vert +\left\vert g^{\prime }\right\vert \}$%
. If $g=g_{1}g_{2}\cdots g_{n}\in G$, it is easily seen that $\mathbf{F}%
_{g}=G$ if $g_{n}=s_{1}$ and $\mathbf{F}_{g}=\mathbf{F}_{s_{2}}$ if $%
g_{n}=s_{2}$, note that $\mathbf{F}_{s_{2}}\neq G$. Thus 
\begin{eqnarray}
\gamma _{i,n}^{[s_{1}]}
&=&\sum_{j_{1},j_{2}=1}^{2}t_{ij_{1}}^{1}t_{ij_{2}}^{2}\gamma
_{j_{1},n-1}^{[s_{1}s_{1}]}\gamma
_{j_{2},n-1}^{[s_{1}s_{2}]}=%
\sum_{j_{1},j_{2}=1}^{2}t_{ij_{1}}^{1}t_{ij_{2}}^{2}\gamma
_{j_{1},n-1}^{[s_{1}]}\gamma _{j_{2},n-1}^{[s_{2}]},  \label{15} \\
\gamma _{i,n}^{[s_{2}]} &=&\sum_{j=1}^{2}t_{ij}^{1}\gamma
_{j,n-1}^{[s_{2}s_{1}]}=\sum_{j=1}^{2}t_{ij}^{1}\gamma
_{j,n-1}^{[s_{1}]},1\leq i\leq 2\text{ and }n\geq 3.  \label{16}
\end{eqnarray}%
The first equality in (\ref{15}) means that the $\Gamma _{i,n}^{[s_{1}]}$ is
generated by $\Gamma _{j_{1},n-1}^{[s_{1}s_{1}]}$ (resp. $\Gamma
_{j_{2},n-1}^{[s_{1}s_{2}]}$) according to the rule of $t_{ij_{1}}^{1}$
(resp. $t_{ij_{2}}^{2}$) of $T_{1}$ (resp. $T_{2}$). Since $\Gamma
_{j_{1},n-1}^{[s_{1}s_{1}]}$ and $\Gamma _{j_{2},n-1}^{[s_{1}s_{2}]}$ are in
different branches, the summation $%
\sum_{j_{1},j_{2}=1}^{2}t_{ij_{1}}^{1}t_{ij_{2}}^{2}\gamma
_{j_{1},n-1}^{[s_{1}s_{1}]}\gamma _{j_{2},n-1}^{[s_{1}s_{2}]}$ demonstrates
the cardinality of $\Gamma _{i,n}^{[s_{1}]}$. The second equality in (\ref%
{15}) comes from the fact that $\mathbf{F}_{s_{1}s_{1}}=\mathbf{F}_{s_{1}}$
and $\mathbf{F}_{s_{1}s_{2}}=\mathbf{F}_{s_{2}}$. Similar argument derives (%
\ref{16}), the only difference is that we do not have the item $\gamma
_{j,n-1}^{[s_{2}s_{2}]}$ in the summation since $A(2,2)=0$, i.e., $%
s_{2}s_{2}=s_{2}$. Since the equations (\ref{15}) and (\ref{16}) are the
nonlinear recurrence equations which describe the numbers $\gamma
_{i,n}^{[s_{1}]}$ and $\gamma _{i,n}^{[s_{2}]}$ for $i=1,2$, we continue to
write $\{\gamma _{i,n}^{[s_{1}]},\gamma _{i,n}^{[s_{2}]}\}_{i=1}^{2}$ to
represent nonlinear recurrence equations (\ref{15}) and (\ref{16}) by abuse
of notation.

The nonlinear recurrence equation $\{\gamma _{i,n}^{[s_{1}]},\gamma
_{i,n}^{[s_{2}]}\}_{i=1}^{2}$ can be described in an efficient way. Let $%
\mathcal{M}_{m}$ be the collection of $m\times m$ binary matrices. Let $%
v=(v_{i})_{i=1}^{n}$ and $w=(w_{i})_{i=1}^{n}$ be two vectors over $\mathbb{R%
}$. Denote by $\otimes $ the \emph{dyadic product }of $v$ and $w$; that is, 
\begin{equation*}
v\otimes w=(v_{1}w_{1},v_{1}w_{2},\ldots ,v_{1}w_{n},\ldots
,v_{n}w_{1},v_{n}w_{2},\ldots ,v_{n}w_{n})\text{.}
\end{equation*}

Let $M\in \mathcal{M}_{m}$ and $M^{(i)}$ denote the $i$th row of $M$. Define 
\begin{equation}
\alpha _{i}=T_{1}^{(i)}\otimes T_{2}^{(i)}\text{ for }i=1,2  \label{8}
\end{equation}%
and 
\begin{equation}
\Theta _{n}^{[s_{j}]}=\left\{ 
\begin{array}{ccc}
(\gamma _{1,n}^{[s_{1}]},\gamma _{2,n}^{[s_{1}]})\otimes (\gamma
_{1,n}^{[s_{2}]},\gamma _{2,n}^{[s_{2}]}), &  & \text{if }j=1; \\ 
(\gamma _{1,n}^{[s_{1}]},\gamma _{2,n}^{[s_{1}]})\otimes (1,1), &  & \text{%
if }j=2.%
\end{array}%
\right.  \label{11}
\end{equation}

Let \textquotedblleft $\cdot $\textquotedblright\ be the usual inner
product, we define 
\begin{equation}
\widehat{\gamma }_{i,n}^{[s_{j}]}=\alpha _{i}\cdot \Theta _{n-1}^{[s_{j}]}%
\text{ for }1\leq i,j\leq 2.  \label{24}
\end{equation}

\begin{theorem}
\label{Thm: 5} Let $T=(T_{1},T_{2})\in \mathcal{M}_{2}\times \mathcal{M}_{2}$%
, the formula (\ref{15}) and (\ref{16}) can be reformulated as follows. For $%
i=1,2$

\begin{enumerate}
\item $\gamma _{i,n}^{[s_{1}]}=\widehat{\gamma }_{i,n}^{[s_{1}]}$,

\item $\gamma _{i,n}^{[s_{2}]}$ is derived from $\widehat{\gamma }%
_{i,n}^{[s_{2}]}$ by letting all coefficients of the items $\gamma
_{1,n-1}^{[s_{1}]}$ and $\gamma _{2,n-1}^{[s_{1}]}$ in $\widehat{\gamma }%
_{i,n}^{[s_{2}]}$ to be $1$.
\end{enumerate}
\end{theorem}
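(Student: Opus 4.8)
The plan is to show that the compact formula (\ref{24}) simply repackages the two nonlinear recurrences (\ref{15}) and (\ref{16}), so the proof is essentially a bookkeeping verification that unwinds the definitions of $\alpha_i$, $\Theta_{n-1}^{[s_j]}$, and the dyadic product $\otimes$. First I would fix $i\in\{1,2\}$ and expand $\alpha_i=T_1^{(i)}\otimes T_2^{(i)}$ using (\ref{8}); since $T_1^{(i)}=(t_{i1}^1,t_{i2}^1)$ and $T_2^{(i)}=(t_{i1}^2,t_{i2}^2)$, the dyadic product gives the $4$-vector $(t_{i1}^1 t_{i1}^2,\ t_{i1}^1 t_{i2}^2,\ t_{i2}^1 t_{i1}^2,\ t_{i2}^1 t_{i2}^2)$, indexed by pairs $(j_1,j_2)$ in the order dictated by the definition of $\otimes$.

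For part (1), I would then expand $\Theta_{n-1}^{[s_1]}=(\gamma_{1,n-1}^{[s_1]},\gamma_{2,n-1}^{[s_1]})\otimes(\gamma_{1,n-1}^{[s_2]},\gamma_{2,n-1}^{[s_2]})$ into the matching $4$-vector whose $(j_1,j_2)$-entry is $\gamma_{j_1,n-1}^{[s_1]}\gamma_{j_2,n-1}^{[s_2]}$. Taking the inner product $\alpha_i\cdot\Theta_{n-1}^{[s_1]}$ then collects the four products $t_{ij_1}^1 t_{ij_2}^2\,\gamma_{j_1,n-1}^{[s_1]}\gamma_{j_2,n-1}^{[s_2]}$ summed over $j_1,j_2\in\{1,2\}$, which is exactly the right-hand side of the second equality in (\ref{15}). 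Hence $\widehat{\gamma}_{i,n}^{[s_1]}=\gamma_{i,n}^{[s_1]}$, establishing (1). The only thing to check carefully here is that the two vectors $\alpha_i$ and $\Theta_{n-1}^{[s_1]}$ are indexed compatibly — i.e. that the order in which $\otimes$ lists the coordinates is consistent across both factors so that the inner product pairs $t_{ij_1}^1 t_{ij_2}^2$ with $\gamma_{j_1,n-1}^{[s_1]}\gamma_{j_2,n-1}^{[s_2]}$ rather than with a transposed index; this is immediate from the single definition of $\otimes$ but should be stated.

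For part (2) I would carry out the same expansion but now with $\Theta_{n-1}^{[s_2]}=(\gamma_{1,n-1}^{[s_1]},\gamma_{2,n-1}^{[s_1]})\otimes(1,1)$, whose $(j_1,j_2)$-entry is $\gamma_{j_1,n-1}^{[s_1]}\cdot 1$. The inner product $\widehat{\gamma}_{i,n}^{[s_2]}=\alpha_i\cdot\Theta_{n-1}^{[s_2]}=\sum_{j_1,j_2}t_{ij_1}^1 t_{ij_2}^2\,\gamma_{j_1,n-1}^{[s_1]}$ therefore still carries the spurious factors $t_{ij_2}^2$ coming from the second generator, whereas the true recurrence (\ref{16}) reads $\gamma_{i,n}^{[s_2]}=\sum_{j}t_{ij}^1\gamma_{j,n-1}^{[s_1]}$ with no $T_2$-dependence (reflecting $s_2 s_2=s_2$, so there is no second branch rooted at $s_2$). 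The prescription in the statement — ``letting all coefficients of $\gamma_{1,n-1}^{[s_1]}$ and $\gamma_{2,n-1}^{[s_1]}$ in $\widehat{\gamma}_{i,n}^{[s_2]}$ be $1$'' — is precisely the operation that strips those $T_2$ factors: after collecting $\widehat{\gamma}_{i,n}^{[s_2]}=\sum_{j_1}\bigl(\sum_{j_2}t_{ij_1}^1 t_{ij_2}^2\bigr)\gamma_{j_1,n-1}^{[s_1]}$ and replacing each bracketed coefficient by $1$, one recovers $\sum_{j_1}\gamma_{j_1,n-1}^{[s_1]}$. I expect the main subtlety to be aligning this normalization correctly with (\ref{16}): one must verify that forcing the coefficient to $1$ reproduces $\sum_j t_{ij}^1\gamma_{j,n-1}^{[s_1]}$ rather than the unweighted sum $\sum_j \gamma_{j,n-1}^{[s_1]}$, i.e. that the $T_1$-weights $t_{ij}^1$ are retained while only the $T_2$-overcounting is removed. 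Under the standing assumption that $T_i$ has no zero rows, the restriction of the coefficient-setting to exactly the $\gamma^{[s_1]}$ terms makes this unambiguous, and the verification reduces to matching indices term by term.
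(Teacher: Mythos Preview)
Your approach is correct and matches the paper's: both proofs unwind the definitions of $\alpha_i$, $\Theta_{n-1}^{[s_j]}$, and $\otimes$, then match indices against (\ref{15}) and (\ref{16}). The one point to tighten is in part~(2): the operation ``set all coefficients to $1$'' must be read as applying only to terms that actually occur with nonzero coefficient, and the standing no-zero-rows hypothesis on $T_2$ is exactly what guarantees that the coefficient $t_{ij_1}^1\bigl(\sum_{j_2}t_{ij_2}^2\bigr)$ of $\gamma_{j_1,n-1}^{[s_1]}$ is nonzero iff $t_{ij_1}^1=1$, so normalizing yields $\sum_{j}t_{ij}^{1}\gamma_{j,n-1}^{[s_1]}$ rather than the unweighted sum you first wrote down.
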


\begin{proof}
It follows from (\ref{15}) 
\begin{eqnarray*}
\gamma _{i,n}^{[s_{1}]}
&=&\sum_{j_{1},j_{2}=1}^{2}t_{ij_{1}}^{1}t_{ij_{2}}^{2}\gamma
_{j_{1},n-1}^{[s_{1}s_{1}]}\gamma _{j_{2},n-1}^{[s_{1}s_{2}]} \\
&=&\left( \sum_{j_{1}=1}^{2}t_{ij_{1}}^{1}\gamma
_{j_{1},n-1}^{[s_{1}s_{1}]}\right) \left(
\sum_{j_{2}=1}^{2}t_{ij_{2}}^{2}\gamma _{j_{2},n-1}^{[s_{1}s_{2}]}\right)
\end{eqnarray*}%
Since $\gamma _{j_{1},n-1}^{[s_{1}s_{1}]}=\gamma _{j_{1},n-1}^{[s_{1}]}$ and 
$\gamma _{j_{1},n-1}^{[s_{1}s_{2}]}=\gamma _{j_{1},n-1}^{[s_{2}]}$, we
conclude that 
\begin{eqnarray*}
\gamma _{i,n}^{[s_{1}]} &=&\left[ \left( t_{i1}^{1},t_{i2}^{1}\right)
\otimes \left( t_{i1}^{2},t_{i2}^{2}\right) \right] \cdot \left[ \left(
\gamma _{1,n-1}^{[s_{1}]},\gamma _{2,n-1}^{[s_{1}]}\right) \otimes \left(
\gamma _{1,n-1}^{[s_{2}]},\gamma _{2,n-1}^{[s_{2}]}\right) \right] \\
&=&\alpha _{i}\cdot \Theta _{n-1}^{[s_{j}]}=\widehat{\gamma }_{i,n}^{[s_{1}]}%
\text{.}
\end{eqnarray*}
Thus $\gamma _{i,n}^{[s_{1}]}=\widehat{\gamma }_{i,n}^{[s_{1}]}$. On the
other hand, it follows from (\ref{24}) we have 
\begin{eqnarray}
\widehat{\gamma }_{i,n}^{[s_{2}]} &=&\alpha _{i}\cdot \Theta _{n-1}^{[s_{2}]}
\notag \\
&=&\left(
t_{i1}^{1}t_{i1}^{2},t_{i1}^{1}t_{i2}^{2},t_{i2}^{1}t_{i1}^{2},t_{i2}^{1}t_{i2}^{2}\right) \cdot 
\left[ (\gamma _{1,n-1}^{[s_{1}]},\gamma _{2,n-1}^{[s_{1}]})\otimes (1,1)%
\right]  \notag \\
&=&\left(
t_{i1}^{1}t_{i1}^{2},t_{i1}^{1}t_{i2}^{2},t_{i2}^{1}t_{i1}^{2},t_{i2}^{1}t_{i2}^{2}\right) \cdot \left( \gamma _{1,n-1}^{[s_{1}]},\gamma _{1,n-1}^{[s_{1}]},\gamma _{2,n-1}^{[s_{1}]},\gamma _{2,n-1}^{[s_{1}]}\right)
\notag \\
&=&t_{i1}^{1}t_{i1}^{2}\gamma _{1,n-1}^{[s_{1}]}+t_{i1}^{1}t_{i2}^{2}\gamma
_{1,n-1}^{[s_{1}]}+t_{i2}^{1}t_{i1}^{2}\gamma
_{2,n-1}^{[s_{1}]}+t_{i2}^{1}t_{i2}^{2}\gamma _{2,n-1}^{[s_{1}]}  \label{25}
\end{eqnarray}

Suppose that there is no restriction on the node $s_{2}$ in $G$, the same
reasoning as $\gamma _{i,n}^{[s_{1}]}$ applied to $\gamma _{i,n}^{[s_{2}]}$
implies 
\begin{eqnarray}
\gamma _{i,n}^{[s_{2}]} &=&t_{i1}^{1}t_{i1}^{2}\gamma
_{1,n-1}^{[s_{1}]}\gamma _{1,n-1}^{[s_{2}]}+t_{i1}^{1}t_{i2}^{2}\gamma
_{1,n-1}^{[s_{1}]}\gamma _{2,n-1}^{[s_{2}]}  \label{27} \\
&&+t_{i2}^{1}t_{i1}^{2}\gamma _{2,n-1}^{[s_{1}]}\gamma
_{1,n-1}^{[s_{2}]}+t_{i2}^{1}t_{i2}^{2}\gamma _{2,n-1}^{[s_{1}]}\gamma
_{2,n-1}^{[s_{2}]}.  \notag
\end{eqnarray}%
Since $s_{2}s_{2}=s_{2}$, the formula (\ref{25}) is derived from (\ref{27})
by letting $\gamma _{i,n-1}^{[s_{2}]}=1$ for $i=1,2$. However, compare to
the formula (\ref{16}) $\gamma
_{i,n}^{[s_{2}]}=\sum_{j=1}^{2}t_{ij}^{1}\gamma _{j,n-1}^{[s_{1}]}$, the
formula (\ref{25}) counts $\gamma _{i,n}^{[s_{2}]}$ repeatedly, e.g., if $%
t_{i1}^{1}t_{i1}^{2}=t_{i1}^{1}t_{i2}^{2}=1$, then (\ref{25}) counts the
item $\gamma _{1,n-1}^{[s_{1}]}$ twice. Thus $\gamma _{i,n}^{[s_{2}]}$ can
be derived from $\widehat{\gamma }_{i,n}^{[s_{2}]}$ by letting all
coefficients of the terms $\gamma _{1,n-1}^{[s_{1}]}$ and $\gamma
_{2,n-1}^{[s_{1}]}$ in $\widehat{\gamma }_{i,n}^{[s_{j}]}$ to be $1$. This
completes the proof.
\end{proof}

It is worth noting that the intrinsic meaning of (\ref{24}) is that the
effect of $T$ (\emph{rules}) comes from the factor $\alpha _{i}$ (since $%
\alpha _{i}=\alpha _{i}(T)$) and the effect of $G$ (\emph{lattice}) comes
from the factor $\Theta _{n-1}^{[s_{j}]}$.

\begin{example}
\label{Exam: 2}Let $T=(T_{1},T_{2})$, where $T_{i}=\left( 
\begin{array}{cc}
1 & 1 \\ 
1 & 0%
\end{array}%
\right) $ for $i=1,2$. Then we have $\alpha _{1}=(1,1,1,1)$ and $\alpha
_{2}=(1,0,0,0)$. Apply Theorem \ref{Thm: 5} we have $\gamma
_{1,n}^{[s_{1}]}=\sum_{i,j=1}^{2}\gamma _{i,n-1}^{[s_{1}]}\gamma
_{j,n-1}^{[s_{2}]}$ and $\gamma _{2,n}^{[s_{1}]}=\gamma
_{1,n-1}^{[s_{1}]}\gamma _{1,n-1}^{[s_{2}]}$. On the other hand, it follows
from $\widehat{\gamma }_{1,n}^{[s_{2}]}=2\gamma _{1,n-1}^{[s_{1}]}+2\gamma
_{2,n-1}^{[s_{1}]}$ and $\widehat{\gamma }_{2,n}^{[s_{2}]}=\gamma
_{1,n-1}^{[s_{1}]}$ that we have $\gamma _{1,n}^{[s_{2}]}=\gamma
_{1,n-1}^{[s_{1}]}+\gamma _{2,n-1}^{[s_{1}]}$ and $\gamma
_{2,n}^{[s_{2}]}=\gamma _{1,n-1}^{[s_{1}]}$.
\end{example}

Since $T=(T_{1},T_{2})\in \mathcal{M}_{2}\times \mathcal{M}_{2}$, there are
only finite possibilities of $T_{i}^{(1)}$ and $T_{i}^{(2)}$, namely, $%
(1,1),(1,0)$ and $(0,1)$. Hence we have only finite choices of $\alpha _{i}$
(recall (\ref{8})) for $i=1,2$ as follows. 
\begin{eqnarray*}
v_{1} &=&(1,1,1,1),v_{2}=(1,0,1,0),v_{3}=(0,1,0,1),v_{4}=(1,1,0,0), \\
v_{5} &=&(0,0,1,1),v_{6}=(1,0,0,0),v_{7}=(0,1,0,0),v_{8}=(0,0,1,0), \\
v_{9} &=&(0,0,0,1).
\end{eqnarray*}%
For the convenience of the discussion, we define $F_{kl}=\{\gamma
_{i,n}^{[s_{1}]},\gamma _{i,n}^{[s_{2}]}\}_{i=1}^{2}$ the nonlinear
recurrence equation of Theorem \ref{Thm: 5} by choosing $\left( \alpha
_{1},\alpha _{2}\right) =\left( v_{k},v_{l}\right) $, and $h=h_{kl}$ if the
corresponding recurrence equation is $F_{kl}$.

\begin{remark}
\label{Rmk: 1}

\begin{enumerate}
\item Given $\left( \alpha _{1},\alpha _{2}\right) =\left(
v_{k},v_{l}\right) $, the pair $(T_{1},T_{2})$ is also uniquely determined.
For instance, if $\left( \alpha _{1},\alpha _{2}\right) =\left(
v_{2},v_{3}\right) $, then $T_{1}^{(1)}=(1,1)$, $T_{2}^{(1)}=(1,0)$, $%
T_{1}^{(2)}=(1,1)$ and $T_{1}^{(2)}=(0,1)$. Thus, one can reconstruct $%
T=\left( T_{1},T_{2}\right) $ as 
\begin{equation}
T_{1}=\left( 
\begin{array}{cc}
1 & 1 \\ 
1 & 1%
\end{array}%
\right) \text{ and }T_{1}=\left( 
\begin{array}{cc}
1 & 0 \\ 
0 & 1%
\end{array}%
\right) \text{.}  \label{37}
\end{equation}

\item Note that $\mathbf{F}_{g}=G$ if $g_{n}=s_{1}$. The entropy (\ref{21})
can also be represented as 
\begin{equation}
h=\lim_{n\rightarrow \infty }\frac{\ln \gamma _{n}}{\left\vert
E_{n}\right\vert }=\lim_{n\rightarrow \infty }\frac{\ln \left( \gamma
_{1,n}^{[s_{1}]}+\gamma _{2,n}^{[s_{1}]}\right) }{\left\vert
E_{n}\right\vert }\text{,}  \label{17}
\end{equation}%
where the existence of the limit is due to Theorem \ref{Thm: 4}.
\end{enumerate}
\end{remark}

\subsection{Equivalence of the recurrence equations}

Given two nonlinear recurrence equations $F_{kl}$ and $F_{pq}$, we say that $%
F_{kl}$ is \emph{equivalent} to $F_{pq}$ (write $F_{kl}\simeq F_{pq}$) if $%
F_{kl}$ is equal to $F_{pq}$ by interchanging items $\gamma _{1,n}^{[s_{1}]}$
with $\gamma _{2,n}^{[s_{1}]}$ and $\gamma _{1,n}^{[s_{2}]}$ with $\gamma
_{2,n}^{[s_{2}]}$. It follows from (\ref{17}) that the entropies of two $G$%
-SFTs are equal if their corresponding nonlinear recurrence equations are
equivalent.

\begin{example}
$F_{48}\simeq F_{75}$.
\end{example}

\begin{proof}
It follows from Theorem \ref{Thm: 5} we obtain 
\begin{equation*}
F_{48}=\left\{ 
\begin{array}{c}
\gamma _{1,n}^{[s_{1}]}=\gamma _{1,n-1}^{[s_{1}]}\gamma
_{1,n-1}^{[s_{2}]}+\gamma _{1,n-1}^{[s_{1}]}\gamma _{2,n-1}^{[s_{2}]}, \\ 
\gamma _{2,n}^{[s_{1}]}=\gamma _{2,n-1}^{[s_{1}]}\gamma _{1,n-1}^{[s_{2}]},
\\ 
\gamma _{1,n}^{[s_{2}]}=\gamma _{1,n-1}^{[s_{1}]}, \\ 
\gamma _{2,n}^{[s_{2}]}=\gamma _{2,n-1}^{[s_{1}]}, \\ 
\gamma _{1,1}^{[s_{1}]}=2,\gamma _{2,1}^{[s_{1}]}=1,\gamma
_{1,1}^{[s_{2}]}=1,\gamma _{2,1}^{[s_{2}]}=1.%
\end{array}%
\right.
\end{equation*}%
If we interchange $\gamma _{1,n}^{[s_{1}]}$ (resp. $\gamma _{1,n}^{[s_{2}]}$%
) with $\gamma _{2,n}^{[s_{1}]}$ (resp. $\gamma _{2,n}^{[s_{2}]}$) we have%
\begin{equation*}
\left\{ 
\begin{array}{c}
\gamma _{2,n}^{[s_{1}]}=\gamma _{2,n-1}^{[s_{1}]}\gamma
_{2,n-1}^{[s_{2}]}+\gamma _{2,n-1}^{[s_{1}]}\gamma _{1,n-1}^{[s_{2}]}, \\ 
\gamma _{1,n}^{[s_{1}]}=\gamma _{1,n-1}^{[s_{1}]}\gamma _{2,n-1}^{[s_{2}]},
\\ 
\gamma _{2,n}^{[s_{2}]}=\gamma _{2,n-1}^{[s_{1}]}, \\ 
\gamma _{1,n}^{[s_{2}]}=\gamma _{1,n-1}^{[s_{1}]}, \\ 
\gamma _{1,1}^{[s_{1}]}=1,\gamma _{2,1}^{[s_{1}]}=2,\gamma
_{1,1}^{[s_{2}]}=1,\gamma _{2,1}^{[s_{2}]}=1.%
\end{array}%
\right.
\end{equation*}%
One can check that it is indeed $F_{75}$. This completes the proof.
\end{proof}

\section{Formula and estimate of entropy}

In what follows, $\lambda $ stands for the \emph{spectral radius} of $A$,
i.e., $\lambda =\frac{1+\sqrt{5}}{2}$ and $\bar{\lambda}$ is its conjugate.
We provide various types of nonlinear recurrence equations in which the
formula (or estimate) of $h$ are presented in this section. By abuse of
notation we also denote by $\left\vert v\right\vert
=\sum_{i=1}^{n}\left\vert v^{(i)}\right\vert $ the\emph{\ norm} of $v\in 
\mathbb{R}^{n}$ and $v^{(i)}$ the $i$th coordinate of $v$. It can be easily
checked that $\left\vert E_{n}\right\vert =\left( \sum_{i=0}^{n}A^{i}\mathbf{%
1}\right) ^{(1)}$, where $\mathbf{1}=(1,1)^{\prime }$ and $v^{\prime }$
denotes the \emph{transpose} of $v$.

\subsection{Zero entropy type}

Proposition \ref{Prop: 1} below indicates that $h_{kl}=0$ if $F_{kl}$
satisfies $\left\vert v_{k}\right\vert =\left\vert v_{l}\right\vert =1$,
e.g., $k,l=6,7,8,9$. We call such $F_{kl}$ \emph{zero entropy type }(write
type $\mathbf{Z}$).

\begin{proposition}
\label{Prop: 1}Let $T=(T_{1},T_{2})\in \mathcal{M}_{2}\times \mathcal{M}_{2}$%
, and $\alpha _{i}=\alpha _{i}(T)$ be defined as above for $i=1,2$. If $%
\left\vert \alpha _{1}\right\vert =\left\vert \alpha _{2}\right\vert =1$,
then $h=0$.
\end{proposition}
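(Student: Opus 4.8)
The plan is to show that the hypothesis forces every count $\gamma_{i,n}^{[s_j]}$ to equal $1$, so that the total $\gamma_n$ stays bounded while the normalization $\left\vert E_n\right\vert$ diverges, giving $h=0$. First I would unpack the hypothesis. Since $\alpha_i = T_1^{(i)}\otimes T_2^{(i)}$ and the dyadic product satisfies $\left\vert v\otimes w\right\vert = \left\vert v\right\vert\left\vert w\right\vert$ for nonnegative $v,w$, we have $\left\vert \alpha_i\right\vert = \left\vert T_1^{(i)}\right\vert\left\vert T_2^{(i)}\right\vert$. As each row norm lies in $\{1,2\}$ (no zero rows), $\left\vert\alpha_i\right\vert=1$ is equivalent to $\left\vert T_1^{(i)}\right\vert=\left\vert T_2^{(i)}\right\vert=1$; hence each $\alpha_i$ is a standard basis vector of $\mathbb{R}^4$, i.e.\ one of $v_6,\dots,v_9$, having a single nonzero coordinate.

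Next I would feed this into Theorem~\ref{Thm: 5}. Because $\alpha_i$ selects exactly one coordinate of $\Theta_{n-1}^{[s_1]}$, part~(1) gives $\gamma_{i,n}^{[s_1]}=\gamma_{a,n-1}^{[s_1]}\gamma_{b,n-1}^{[s_2]}$ for some $a,b\in\{1,2\}$ read off from $\alpha_i$ — a single product, with no summation. Similarly $\widehat\gamma_{i,n}^{[s_2]}$ picks one coordinate of $\Theta_{n-1}^{[s_2]}$, so it is already a single term $\gamma_{c,n-1}^{[s_1]}$ with coefficient $1$; the coefficient-collapsing prescription of part~(2) is then vacuous, and $\gamma_{i,n}^{[s_2]}=\gamma_{c,n-1}^{[s_1]}$. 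Thus the entire system is purely multiplicative, with exactly one factor per equation.

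I would then run an induction to conclude $\gamma_{i,n}^{[s_1]}=\gamma_{i,n}^{[s_2]}=1$ for all $i$ and all $n\ge 1$. For the base case, direct counting over $s_1E_1$ and over $s_2E_1$ (using $s_2s_2=s_2$, so the root $s_2$ has a single child) yields $\gamma_{i,1}^{[s_1]}=\left\vert T_1^{(i)}\right\vert\left\vert T_2^{(i)}\right\vert=\left\vert\alpha_i\right\vert=1$ and $\gamma_{i,1}^{[s_2]}=\left\vert T_1^{(i)}\right\vert=1$; the remaining small values of $n$ needed before the recurrences (\ref{15})--(\ref{16}) take effect at $n\ge 3$ are checked by the same count. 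The inductive step is immediate: a single product, or a single copy, of terms equal to $1$ is again $1$.

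Finally, with every $\gamma_{i,n}^{[s_j]}=1$, Remark~\ref{Rmk: 1}(2) gives $\gamma_n=\gamma_{1,n}^{[s_1]}+\gamma_{2,n}^{[s_1]}=2$, so $h=\lim_{n\to\infty}\frac{\ln 2}{\left\vert E_n\right\vert}=0$, using that $\left\vert E_n\right\vert=\sum_{k=0}^n l_k\to\infty$ because $\rho_A=\lambda>1$ forces $l_n\to\infty$. There is essentially no analytic difficulty in this argument: the only genuine step is structural, namely recognizing that $\left\vert\alpha_i\right\vert=1$ removes every summation in Theorem~\ref{Thm: 5} and reduces the recurrence to a multiplicative system with unit initial data; the rest is bookkeeping on the base cases and the observation that the normalization diverges.
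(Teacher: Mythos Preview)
Your proof is correct and follows essentially the same approach as the paper: both argue by induction that every $\gamma_{i,n}^{[s_j]}=1$, whence $h=0$. The paper works out only the representative case $F_{67}$ and asserts the others are similar, whereas you treat all sixteen cases uniformly by first unpacking $|\alpha_i|=1$ as $|T_1^{(i)}|=|T_2^{(i)}|=1$ and then observing that Theorem~\ref{Thm: 5} collapses to a purely multiplicative system with unit initial data.
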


\begin{proof}
For simplicity we only prove the case $F_{67}$, the other cases can be
treated similarly. Indeed, $F_{67}$ is of the following form. 
\begin{equation}
\left\{ 
\begin{array}{c}
\gamma _{1,n}^{[s_{1}]}=\gamma _{1,n-1}^{[s_{1}]}\gamma _{1,n-1}^{[s_{2}]},
\\ 
\gamma _{2,n}^{[s_{1}]}=\gamma _{1,n-1}^{[s_{1}]}\gamma _{2,n-1}^{[s_{2}]},
\\ 
\gamma _{1,n}^{[s_{2}]}=\gamma _{1,n-1}^{[s_{1}]}, \\ 
\gamma _{2,n}^{[s_{2}]}=\gamma _{1,n-1}^{[s_{1}]}, \\ 
\gamma _{1,1}^{[s_{1}]}=\gamma _{2,1}^{[s_{1}]}=\gamma
_{1,1}^{[s_{2}]}=\gamma _{2,1}^{[s_{2}]}=1%
\end{array}%
\right.  \label{36}
\end{equation}%
Note that $\gamma _{i,1}^{[s_{j}]}=1$, and if we assume that $\gamma
_{i,k-1}^{[s_{j}]}=1$ for $1\leq i,j\leq 2$, (\ref{36}) infers that $\gamma
_{i,k}^{[s_{j}]}=1$. Thus $\gamma _{i,n}^{[s_{j}]}=1$ for all $1\leq n$ and $%
1\leq i,j\leq 2$ by induction. This shows that $h_{67}=0$. This completes
the proof.
\end{proof}

\subsection{Equal growth type}

Let $T=(T_{1},T_{2})$ and $F=\{\gamma _{i,n}^{[s_{1}]},\gamma
_{i,n}^{[s_{2}]}\}_{i=1}^{2}$ be its nonlinear recurrence equations, we say
that $F$ is of the \emph{equal growth type} ($F\in \mathbf{E}$) if $%
\left\vert \alpha _{1}\right\vert =\left\vert \alpha _{2}\right\vert $.
Denote by $k_{i,j}$ the number of different items of $\gamma
_{i,n}^{[s_{j}]} $ for $1\leq i,j\leq 2$. If $\left\vert \alpha
_{1}\right\vert =\left\vert \alpha _{2}\right\vert $, it can be checked that 
$k_{1,1}=k_{2,1}=\alpha $, but $k_{1,2}$ may not equal to $k_{2,2}$ in
general.

\begin{theorem}
\label{Thm: 2}Let $T=(T_{1},T_{2})$ and the corresponding $\alpha
_{1},\alpha _{2}$ satisfy $\left\vert \alpha _{1}\right\vert =\left\vert
\alpha _{2}\right\vert =\alpha \in \mathbb{N}$. If $k_{1,2}=k_{2,2}=:\beta $%
, then 
\begin{equation*}
h=\left( \frac{1-\bar{\lambda}\frac{\ln \beta }{\ln \alpha }}{\lambda ^{2}}%
\right) \ln \alpha \text{.}
\end{equation*}%
Furthermore, if $k_{1,2}=k_{2,2}=\alpha $, then $h=\frac{\ln \alpha }{%
\lambda }$.
\end{theorem}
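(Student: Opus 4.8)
The plan is to use the two hypotheses $|\alpha_1|=|\alpha_2|=\alpha$ and $k_{1,2}=k_{2,2}=\beta$ to collapse the four-variable nonlinear system of Theorem \ref{Thm: 5} into a single scalar recurrence. The key observation is a \emph{flatness} phenomenon: I claim that $\gamma_{1,n}^{[s_1]}=\gamma_{2,n}^{[s_1]}=:A_n$ and $\gamma_{1,n}^{[s_2]}=\gamma_{2,n}^{[s_2]}=:C_n$ for every $n$. I would prove this by induction. For the base case, a direct count gives $\gamma_{i,1}^{[s_1]}=|\alpha_i|=\alpha$ and $\gamma_{i,1}^{[s_2]}=\beta$ for $i=1,2$ (the latter being the number $k_{i,2}$ of admissible continuations along $s_1$, since $s_2s_2=s_2$), so flatness at $n=1$ is exactly the content of the two hypotheses. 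For the inductive step, recall from Theorem \ref{Thm: 5} that $\gamma_{i,n}^{[s_1]}$ is a sum of precisely $|\alpha_i|=\alpha$ products $\gamma_{j_1,n-1}^{[s_1]}\gamma_{j_2,n-1}^{[s_2]}$, while $\gamma_{i,n}^{[s_2]}$ is a sum of precisely $k_{i,2}=\beta$ single terms $\gamma_{j,n-1}^{[s_1]}$. If at level $n-1$ every $\gamma_{\cdot,n-1}^{[s_1]}$ equals $A_{n-1}$ and every $\gamma_{\cdot,n-1}^{[s_2]}$ equals $C_{n-1}$, then each product collapses to $A_{n-1}C_{n-1}$ and each single term to $A_{n-1}$; hence $\gamma_{i,n}^{[s_1]}=\alpha A_{n-1}C_{n-1}$ and $\gamma_{i,n}^{[s_2]}=\beta A_{n-1}$ \emph{independently of} $i$, which is exactly the claim.

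With flatness established, the whole system reduces to the scalar pair
\[
A_n=\alpha A_{n-1}C_{n-1},\qquad C_n=\beta A_{n-1},
\]
whence $A_n=\alpha\beta A_{n-1}A_{n-2}$. Setting $g_n=\ln A_n$ converts this into the \emph{linear} inhomogeneous recurrence $g_n=g_{n-1}+g_{n-2}+\ln(\alpha\beta)$, whose characteristic roots are $\lambda$ and $\bar{\lambda}$. Since $1$ is not a root, a constant particular solution exists and the general solution is $g_n=P\lambda^n+Q\bar{\lambda}^n-\ln(\alpha\beta)$. I would pin down $P$ from the initial data $g_1=\ln\alpha$ and $g_2=2\ln\alpha+\ln\beta$ (computed directly); eliminating $Q$ via $\lambda^2=\lambda+1$ gives $P=\lambda(\lambda\ln\alpha+\ln\beta)/\sqrt{5}$.

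Finally I would feed this into the entropy. By Remark \ref{Rmk: 1} and the existence result of Theorem \ref{Thm: 4}, $h=\lim_n \ln(\gamma_{1,n}^{[s_1]}+\gamma_{2,n}^{[s_1]})/|E_n|=\lim_n (g_n+\ln 2)/|E_n|$. The denominator is read off from $\binom{l_{n+1}}{l_n}=A\binom{l_n}{l_{n-1}}$: with $l_0=1,\ l_1=2$ one gets $l_n=F_{n+2}$ and $|E_n|=\sum_{i=0}^n l_i=F_{n+4}-2\sim \lambda^{n+4}/\sqrt{5}$. Because $\bar{\lambda}^n\to 0$ and the additive constants are bounded, only the term $P\lambda^n$ survives the limit, so $h=P\sqrt{5}/\lambda^4=(\lambda\ln\alpha+\ln\beta)/\lambda^3$. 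The closing step is a one-line identity: multiplying the target by $\lambda^3$ and using $\lambda\bar{\lambda}=-1$ shows $\lambda(\ln\alpha-\bar{\lambda}\ln\beta)=\lambda\ln\alpha+\ln\beta$, i.e. $(\lambda\ln\alpha+\ln\beta)/\lambda^3=(\ln\alpha-\bar{\lambda}\ln\beta)/\lambda^2$, which is the claimed value $\big(1-\bar{\lambda}\tfrac{\ln\beta}{\ln\alpha}\big)\lambda^{-2}\ln\alpha$; specializing $\beta=\alpha$ and using $1-\bar{\lambda}=\lambda$ collapses it to $h=\ln\alpha/\lambda$.

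The heart of the argument—and the one nontrivial obstacle—is the flatness lemma. It is precisely there that both hypotheses are indispensable: $|\alpha_1|=|\alpha_2|$ keeps the two $s_1$-rooted counts balanced, while $k_{1,2}=k_{2,2}$ is exactly what keeps the two $s_2$-rooted counts balanced; dropping either would break the induction and force one to track a genuinely $2$-dimensional (or larger) nonlinear system. Once flatness is secured, the problem becomes a single log-linear recurrence and the remaining manipulations are routine.
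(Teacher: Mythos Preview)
Your proposal is correct and follows essentially the same route as the paper: both arguments first establish the ``flatness'' $\gamma_{1,n}^{[s_j]}=\gamma_{2,n}^{[s_j]}$ by induction (using $|\alpha_1|=|\alpha_2|$ for $j=1$ and $k_{1,2}=k_{2,2}$ for $j=2$), then reduce to the pair $A_n=\alpha A_{n-1}C_{n-1}$, $C_n=\beta A_{n-1}$ and pass to logarithms. The only cosmetic difference is bookkeeping: the paper keeps the two-dimensional first-order system $w_n=Aw_{n-1}+b$ and diagonalises $A$, whereas you eliminate $C_n$ to obtain the equivalent scalar second-order recurrence $g_n=g_{n-1}+g_{n-2}+\ln(\alpha\beta)$ and solve it directly; the spectral data $\lambda,\bar\lambda$ and the asymptotics of $|E_n|$ enter in the same way in both versions.
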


\begin{proof}
\textbf{1} First we claim that $\gamma _{1,n}^{[s_{j}]}=\gamma
_{2,n}^{[s_{j}]}$ for $1\leq j\leq 2$ and we prove it by induction. Note
that $\gamma _{1,0}^{[s_{j}]}=\gamma _{2,0}^{[s_{j}]}=1$ and assume that $%
\gamma _{1,n}^{[s_{j}]}=\gamma _{2,n}^{[s_{j}]}$ for $1\leq j\leq 2$.
Theorem \ref{Thm: 5} is applied to show that%
\begin{equation}
\gamma _{i,n+1}^{[s_{1}]}=\widehat{\gamma }_{i,n+1}^{[s_{1}]}=\alpha
_{i}\cdot \Theta _{n}^{[s_{1}]}=\alpha _{i}\cdot \lbrack (\gamma
_{1,n}^{[s_{1}]},\gamma _{2,n}^{[s_{1}]})\otimes (\gamma
_{1,n}^{[s_{2}]},\gamma _{2,n}^{[s_{2}]})]\text{,}  \label{14}
\end{equation}%
and $\gamma _{i,n+1}^{[s_{2}]}$ is constructed by letting all the
coefficients of 
\begin{equation*}
\widehat{\gamma }_{i,n+1}^{[s_{2}]}=\alpha _{i}\cdot \Theta
_{n}^{[s_{2}]}=\alpha _{i}\cdot \lbrack (\gamma _{1,n}^{[s_{1}]},\gamma
_{2,n}^{[s_{1}]})\otimes (1,1)]
\end{equation*}%
to be $1$ (Theorem \ref{Thm: 5}). Since $k_{1,2}=k_{2,2}$, we conclude that $%
\gamma _{1,n+1}^{[s_{2}]}=\gamma _{2,n+1}^{[s_{2}]}$. Combining (\ref{14})
with $\gamma _{1,n+1}^{[s_{2}]}=\gamma _{2,n+1}^{[s_{2}]}$ we can assert
that $\gamma _{1,n+1}^{[s_{1}]}=\gamma _{2,n+1}^{[s_{1}]}$, this proves the
claim.

\textbf{2} Since $\left\vert \alpha _{1}\right\vert =\left\vert \alpha
_{2}\right\vert $ and $\gamma _{1,n}^{[s_{j}]}=\gamma _{2,n}^{[s_{j}]}$ for $%
1\leq j\leq 2$. We have $\gamma _{i,n}^{[s_{1}]}=\left\vert \alpha
_{i}\right\vert \gamma _{1,n-1}^{[s_{1}]}\gamma _{1,n-1}^{[s_{2}]}=\alpha
\gamma _{1,n-1}^{[s_{1}]}\gamma _{1,n-1}^{[s_{2}]}$ and $\gamma
_{i,n}^{[s_{2}]}=\beta \gamma _{1,n-1}^{[s_{1}]}$. Thus, the nonlinear
equation $F=\{\gamma _{i,n}^{[s_{1}]},\gamma _{i,n}^{[s_{2}]}\}_{i=1}^{2}$
can be reduced to the simplified form 
\begin{equation}
\left\{ 
\begin{array}{c}
\gamma _{1,n}^{[s_{1}]}=\alpha \gamma _{1,n-1}^{[s_{1}]}\gamma
_{1,n-1}^{[s_{2}]}, \\ 
\gamma _{1,n}^{[s_{2}]}=\beta \gamma _{1,n-1}^{[s_{1}]}.%
\end{array}%
\right.  \label{29}
\end{equation}%
Let $w_{n}=(\ln \gamma _{1,n}^{[s_{1}]},\ln \gamma _{1,n}^{[s_{2}]})^{\prime
}$. We have $w_{n}=Aw_{n-1}+b$, where $b=(\ln \alpha ,\ln \beta )^{\prime }$%
. Iterate $w_{n}$ we have $w_{n}=A^{n-1}w_{1}+\sum_{i=0}^{n-2}A^{i}b$.
Observe that $w_{1}=b$, thus 
\begin{equation}
w_{n}=\sum_{i=0}^{n-1}A^{i}b=\ln \alpha \left( \sum_{i=0}^{n-1}A^{i}%
\widetilde{b}\right) ,  \label{20}
\end{equation}%
where $\widetilde{b}=(1,\frac{\ln \beta }{\ln \alpha })^{\prime }$.
Combining (\ref{17}) with the fact that $\gamma _{1,n}^{[s_{1}]}=\gamma
_{2,n}^{[s_{1}]}$ we assert that 
\begin{eqnarray}
h &=&\lim_{n\rightarrow \infty }\frac{\ln \sum_{i=1}^{2}\gamma
_{i,n}^{[s_{1}]}}{\left\vert E_{n}\right\vert }=\lim_{n\rightarrow \infty }%
\frac{\ln \gamma _{1,n}^{[s_{1}]}}{\left\vert E_{n}\right\vert }%
=\lim_{n\rightarrow \infty }\frac{w_{n}^{(1)}}{\left\vert E_{n}\right\vert }
\notag \\
&=&\lim_{n\rightarrow \infty }\frac{\left( \ln \alpha \right) \left(
\sum_{i=0}^{n-1}A^{i}\widetilde{b}\right) ^{(1)}}{\left\vert
E_{n}\right\vert }\text{.}  \label{5}
\end{eqnarray}%
Substituting $\left\vert E_{n}\right\vert =\left( \sum_{i=0}^{n}A^{i}\mathbf{%
1}\right) ^{(1)}$ into (\ref{5}) yields 
\begin{equation*}
h=\left( \ln \alpha \right) \lim_{n\rightarrow \infty }\frac{\left(
\sum_{i=0}^{n-1}A^{i}\widetilde{b}\right) ^{(1)}}{\left( \sum_{i=0}^{n}A^{i}%
\mathbf{1}\right) ^{(1)}}.
\end{equation*}%
Set $A=PDP^{-1}$, $P=(p_{ij})$, $P^{-1}=(q_{ij})$, and $D=diag(\lambda ,\bar{%
\lambda})$, we have%
\begin{eqnarray*}
&&\left( \sum_{i=0}^{n-1}A^{i}\widetilde{b}\right) ^{(1)}=\left[ P\left(
\sum_{i=0}^{n-1}D^{i}\right) P^{-1}\widetilde{b}\right] ^{(1)} \\
&=&\left[ P\left( \sum_{i=0}^{n-1}D^{i}\right) \left( q_{11}+q_{12}\frac{\ln
\beta }{\ln \alpha },q_{21}+q_{22}\frac{\ln \beta }{\ln \alpha }\right)
^{\prime }\right] ^{(1)} \\
&=&\sum_{i=0}^{n-1}\left[ \lambda ^{i}p_{11}\left( q_{11}+q_{12}\frac{\ln
\beta }{\ln \alpha }\right) +\bar{\lambda}^{i}p_{12}\left( q_{21}+q_{22}%
\frac{\ln \beta }{\ln \alpha }\right) \right] \\
&=&p_{11}\left( q_{11}+q_{12}\frac{\ln \beta }{\ln \alpha }\right) \frac{%
\lambda ^{n}-1}{\lambda -1}+p_{12}\left( q_{21}+q_{22}\frac{\ln \beta }{\ln
\alpha }\right) \frac{\bar{\lambda}^{n}-1}{\bar{\lambda}-1}\text{.}
\end{eqnarray*}%
It follows the same computation we have 
\begin{equation}
\left( \sum_{i=0}^{n-1}A^{i}\mathbf{1}\right) ^{(1)}=p_{11}\left(
q_{11}+q_{12}\right) \frac{\lambda ^{n}-1}{\lambda -1}+p_{12}\left(
q_{21}+q_{22}\right) \frac{\bar{\lambda}^{n}-1}{\bar{\lambda}-1}\text{.}
\label{28}
\end{equation}%
Thus 
\begin{equation*}
\lim_{n\rightarrow \infty }\frac{\left( \sum_{i=0}^{n-1}A^{i}\widetilde{b}%
\right) ^{(1)}}{\left( \sum_{i=0}^{n}A^{i}\mathbf{1}\right) ^{(1)}}=\frac{%
q_{11}+q_{12}\frac{\ln \beta }{\ln \alpha }}{\left( q_{11}+q_{12}\right)
\lambda }\text{.}
\end{equation*}%
Direct computation shows that 
\begin{equation*}
P=\left( 
\begin{array}{cc}
\frac{1+\sqrt{5}}{2} & \frac{1-\sqrt{5}}{2} \\ 
1 & 1%
\end{array}%
\right) \text{ and }P^{-1}=\frac{1}{\sqrt{5}}\left( 
\begin{array}{cc}
1 & \frac{-1+\sqrt{5}}{2} \\ 
-1 & \frac{1+\sqrt{5}}{2}%
\end{array}%
\right) \text{.}
\end{equation*}%
Thus $\lim_{n\rightarrow \infty }\frac{\left( \sum_{i=0}^{n-1}A^{i}\widehat{b%
}\right) ^{(1)}}{\left( \sum_{i=0}^{n}A^{i}\mathbf{1}\right) ^{(1)}}=\frac{1-%
\bar{\lambda}\frac{\ln \beta }{\ln \alpha }}{\lambda ^{2}}$. If $\alpha
=\beta $, we have $\frac{1-\bar{\lambda}\frac{\ln \beta }{\ln \alpha }}{%
\lambda ^{2}}=\frac{1}{\lambda }$. This completes the proof.
\end{proof}

\begin{example}
\begin{enumerate}
\item $h_{42}=\frac{1-\bar{\lambda}\frac{1}{2}}{\lambda ^{2}}\ln 4=\frac{1}{2%
}\ln 4=\ln 2$.

\item $h_{23}=\frac{1-\bar{\lambda}}{\lambda ^{2}}\ln 2=\frac{1}{\lambda }%
\ln 2\approx 0.428\,39$. Since $\deg (X)=\ln \lambda $ (cf. \cite%
{ban2017coloring}) we have $\gamma _{n}\approx $ $\left( 2^{\frac{1}{\lambda 
}}\right) ^{\lambda ^{n}}=2^{\lambda ^{n-1}}$, e.g., $\gamma _{9}\approx
\left( 2\right) ^{\lambda ^{8}}\approx 1.\,\allowbreak 386\,8\times 10^{14}$.

\item $h_{45}=\frac{1}{\lambda ^{2}}\ln 2\approx 0.264\,76$.
\end{enumerate}
\end{example}

\subsection{Dominating type}

Let $\gamma _{i,n}^{[s_{j}]}=\sum_{l=1}^{n_{ij}}f_{l,n-1}^{ij}$, where $%
f_{l,n-1}^{ij}$ denotes the $l$th item of $\gamma _{i,n}^{[s_{j}]}$. We say
that $\gamma _{i,n}^{[s_{j}]}$ has a \emph{dominate item} if there exists
integer $1\leq r\leq n_{ij}$ such that $f_{r,n}^{ij}\geq f_{l,n}^{ij}$ for
all $r\neq l$ and $n\geq 1$. We say $F=\{\gamma _{i,n}^{[s_{1}]},\gamma
_{i,n}^{[s_{2}]}\}_{i=1}^{2}$ is of the \emph{dominating type} ($F\in 
\mathbf{D}$\textbf{)} if each $\gamma _{i,n}^{[s_{j}]}$ has a dominate item
for all $1\leq i,j\leq 2$. If $F\in \mathbf{D}$, we assume that $%
f_{1,n-1}^{ij}$ is the corresponding dominate item for all $1\leq i,j\leq 2$%
. Thus, 
\begin{equation*}
\gamma
_{i,n}^{[s_{j}]}=\sum_{l=1}^{n_{ij}}f_{l,n-1}^{ij}=f_{1,n-1}^{ij}(1+%
\sum_{l=2}^{n_{ij}}\frac{f_{l,n-1}^{ij}}{f_{1,n-1}^{ij}})
\end{equation*}%
and $1\leq 1+\sum_{l=2}^{n_{ij}}\frac{f_{l,n-1}^{ij}}{f_{1,n-1}^{ij}}\leq 4$%
, where the number $4$ comes from the extreme case where $n_{ij}=4$ and $%
\frac{f_{l,n-1}^{ij}}{f_{1,n-1}^{ij}}\leq 1$ for $l=2,\ldots ,n_{ij}$. Let $%
w_{n}=(\ln \gamma _{1,n}^{[s_{1}]},\ln \gamma _{2,n}^{[s_{1}]},\ln \gamma
_{1,n}^{[s_{2}]},\ln \gamma _{2,n}^{[s_{2}]})^{\prime }$, it follows
immediately from (\ref{15}) and (\ref{16}) that 
\begin{equation}
w_{n}=Kw_{n-1}+b_{n-1},  \label{9}
\end{equation}%
for some $K\in \mathcal{M}_{4}$ and 
\begin{equation}
b_{n-1}=\left( 
\begin{array}{c}
\ln (1+\frac{\sum_{l=2}^{n_{11}}f_{l,n-1}^{11}}{f_{1,n-1}^{ij}}) \\ 
\ln (1+\frac{\sum_{l=2}^{n_{21}}f_{l,n-1}^{21}}{f_{1,n-1}^{ij}}) \\ 
\ln (1+\frac{\sum_{l=2}^{n_{12}}f_{l,n-1}^{12}}{f_{1,n-1}^{ij}}) \\ 
\ln (1+\frac{\sum_{l=2}^{n_{22}}f_{l,n-1}^{22}}{f_{1,n-1}^{ij}})%
\end{array}%
\right) \text{.}  \label{31}
\end{equation}%
Ban-Chang \cite{ban2018topological1} prove that if the symbol $\gamma
_{i}^{[s_{j}]}$ is \emph{essential}\footnote{%
We call the symbol $\gamma _{i}^{[s_{j}]}$ \emph{essential }if there exists $%
n\in \mathbb{N}$ such that $\gamma _{i,n}^{[s_{j}]}>1$, and \emph{%
inessential }otherwise. In \cite{ban2018topological}, the authors find a
finite checkable conditions to characterize whether $\gamma _{i}^{[s_{j}]}$
is essential or inessential.} for $1\leq i,j\leq 2$, then $\rho _{K}=\lambda 
$, where $\rho _{B}$ is the spectral radius of the matrix $B$. Let $v,w\in 
\mathbb{R}^{n}$ we say $v\geq w$ if $v_{i}\geq w_{i}$ for $1\leq i\leq n$.

\begin{proposition}
\label{Thm: 3}Suppose $\alpha _{1}>\alpha _{2}$ (or $\alpha _{2}>\alpha _{1}$%
), then $F\in \mathbf{D}$.
\end{proposition}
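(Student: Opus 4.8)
The plan is to reduce the existence of a dominate item in every $\gamma_{i,n}^{[s_j]}$ to two componentwise monotonicities among the four sequences, and then to exploit the rectangular shape of $\mathrm{supp}(\alpha_i)$ forced by the dyadic product $\alpha_i=T_1^{(i)}\otimes T_2^{(i)}$. Since relabeling the two symbols interchanges the hypotheses $\alpha_1>\alpha_2$ and $\alpha_2>\alpha_1$ while plainly preserving membership in $\mathbf{D}$, I may assume $\alpha_1\geq\alpha_2$ componentwise throughout.

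First I would record the monotonicities $\gamma_{1,n}^{[s_1]}\geq\gamma_{2,n}^{[s_1]}$ and $\gamma_{1,n}^{[s_2]}\geq\gamma_{2,n}^{[s_2]}$, valid for every $n$. Both are immediate from nonnegativity of the counts and require no induction. By Theorem~\ref{Thm: 5}(1) one has $\gamma_{i,n}^{[s_1]}=\alpha_i\cdot\Theta_{n-1}^{[s_1]}$ with $\Theta_{n-1}^{[s_1]}\geq 0$, so $\alpha_1\geq\alpha_2$ gives the first inequality at once. For the second, (\ref{16}) yields $\gamma_{i,n}^{[s_2]}=t_{i1}^1\gamma_{1,n-1}^{[s_1]}+t_{i2}^1\gamma_{2,n-1}^{[s_1]}$, and $\alpha_1\geq\alpha_2$ (both nonzero, hence nonempty rectangular supports) forces $\mathrm{supp}(T_1^{(2)})\subseteq\mathrm{supp}(T_1^{(1)})$, i.e. $T_1^{(1)}\geq T_1^{(2)}$, which delivers $\gamma_{1,n}^{[s_2]}\geq\gamma_{2,n}^{[s_2]}$.

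Next I would exploit the structural fact that $\mathrm{supp}(\alpha_i)=\mathrm{supp}(T_1^{(i)})\times\mathrm{supp}(T_2^{(i)})$ is a product of index sets. Writing $\Theta_{n-1}^{[s_1]}=(x_1,x_2)\otimes(y_1,y_2)$ with $x_1=\gamma_{1,n-1}^{[s_1]}\geq x_2$ and $y_1=\gamma_{1,n-1}^{[s_2]}\geq y_2$ (the previous step), every summand of $\gamma_{i,n}^{[s_1]}$ is a product $x_r y_s$ with $(r,s)$ running over the rectangle $I\times J=\mathrm{supp}(\alpha_i)$, and the corner term $x_{\min I}\,y_{\min J}$ dominates all others because $x_{\min I}\geq x_r\geq 0$ and $y_{\min J}\geq y_s\geq 0$. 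Crucially, the dominating index $(\min I,\min J)$ depends only on $\alpha_i$ and hence is the same for every $n$, exactly as the definition of a dominate item demands. For the two $s_2$-recurrences the items come only from $\{\gamma_{1,n-1}^{[s_1]},\gamma_{2,n-1}^{[s_1]}\}$, so $\gamma_{1,n-1}^{[s_1]}$ when present, or otherwise the single present item, is a dominate item by the first step. Thus all four $\gamma_{i,n}^{[s_j]}$ carry a dominate item and $F\in\mathbf{D}$.

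The step that must not be glossed over, and the genuine content of the argument, is the rectangularity of $\mathrm{supp}(\alpha_i)$. Were it false, a support such as $\{2,3\}$ (the vector $(0,1,1,0)$) could occur, and its two middle products $x_1 y_2$ and $x_2 y_1$ are incomparable, so no dominate item need exist. The point is precisely that such a vector is not the dyadic product of two nonzero rows and therefore never appears as an $\alpha_i$—equivalently, each of the admissible vectors $v_1,\dots,v_9$ has product support. Once this is in hand, the maximum over each rectangle is well defined and everything else is bookkeeping.
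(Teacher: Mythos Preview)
Your argument is correct and is a genuine streamlining of the paper's proof. The paper establishes the proposition by a finite case analysis: it fixes $\alpha_1$ (running over $v_1$ and then $v_2,\dots,v_5$), lists the admissible $\alpha_2<\alpha_1$, writes out the four recurrences explicitly, and in each case verifies by inspection that $\gamma_{1,n}^{[s_j]}\geq\gamma_{2,n}^{[s_j]}$ and then reads off the dominate item. You replace this enumeration with a uniform mechanism: the componentwise inequality $\alpha_1\geq\alpha_2$ directly gives $\gamma_{1,n}^{[s_1]}\geq\gamma_{2,n}^{[s_1]}$ via $\alpha_i\cdot\Theta$, and (together with the no-zero-row hypothesis) gives $T_1^{(1)}\geq T_1^{(2)}$ and hence $\gamma_{1,n}^{[s_2]}\geq\gamma_{2,n}^{[s_2]}$; then the observation that $\mathrm{supp}(\alpha_i)=\mathrm{supp}(T_1^{(i)})\times\mathrm{supp}(T_2^{(i)})$ is a product set lets you pick the corner $(\min I,\min J)$ once and for all as the dominate index. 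What your approach buys is transparency and generalizability (the same rectangularity-plus-monotonicity scheme scales to larger $|\mathcal A|$ and $d$), while the paper's case check has the virtue of displaying the concrete dominate items that are later used in the explicit $K$-matrices of Theorem~\ref{Thm: 7}. Your remark that $(0,1,1,0)$ would break the argument is exactly the point: it is precisely the dyadic-product structure of $\alpha_i$ that rules such vectors out and makes the uniform proof go through.
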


\begin{proof}
We only prove the case where $\alpha _{1}>\alpha _{2}$ and the other case is
similar. The proof is divided into small cases.

\textbf{1 }$\alpha _{1}=v_{1}$. In this case, there are eight possibilities
of $\alpha _{2}$, namely, $\alpha _{2}=v_{i}$ for $i=2,\ldots ,9$. If $%
\alpha _{2}=v_{2}$, the nonlinear recurrence equation $F_{12}=\{\gamma
_{i,n}^{[s_{1}]},\gamma _{i,n}^{[s_{2}]}\}_{i=1}^{2}$ is as follows%
\begin{equation*}
\left\{ 
\begin{array}{c}
\gamma _{1,n}^{[s_{1}]}=\gamma _{1,n-1}^{[s_{1}]}\gamma
_{1,n-1}^{[s_{2}]}+\gamma _{1,n-1}^{[s_{1}]}\gamma _{2,n-1}^{[s_{2}]}+\gamma
_{2,n-1}^{[s_{1}]}\gamma _{1,n-1}^{[s_{2}]}+\gamma _{2,n-1}^{[s_{1}]}\gamma
_{2,n-1}^{[s_{2}]}, \\ 
\gamma _{2,n}^{[s_{1}]}=\gamma _{1,n-1}^{[s_{1}]}\gamma
_{1,n-1}^{[s_{2}]}+\gamma _{2,n-1}^{[s_{1}]}\gamma _{1,n-1}^{[s_{2}]}, \\ 
\gamma _{1,n}^{[s_{2}]}=\gamma _{1,n-1}^{[s_{1}]}+\gamma _{2,n-1}^{[s_{1}]},
\\ 
\gamma _{2,n}^{[s_{2}]}=\gamma _{1,n-1}^{[s_{1}]}+\gamma _{2,n-1}^{[s_{1}]},
\\ 
\gamma _{1,1}^{[s_{1}]}=4,\gamma _{2,1}^{[s_{1}]}=2,\gamma
_{1,1}^{[s_{2}]}=1,\gamma _{2,1}^{[s_{2}]}=1.%
\end{array}%
\right.
\end{equation*}%
Since $\gamma _{1,n}^{[s_{2}]}\geq \gamma _{2,n}^{[s_{2}]}$ and $\gamma
_{1,n}^{[s_{1}]}\geq \gamma _{2,n}^{[s_{1}]}$ we deduce that $\gamma
_{1,n-1}^{[s_{1}]}\gamma _{1,n-1}^{[s_{2}]}$ (resp. $\gamma
_{1,n-1}^{[s_{1}]}$) is the dominate item for $\gamma _{1,n}^{[s_{1}]}$ and $%
\gamma _{2,n}^{[s_{1}]}$ (resp. $\gamma _{1,n}^{[s_{2}]}$ and $\gamma
_{2,n}^{[s_{2}]}$). Thus $F_{12}\in \mathbf{D}$. $F_{13},F_{14},F_{15}\in 
\mathbf{D}$ can be treated in the same manner. For $\alpha
_{2}=v_{6},v_{7},v_{8},v_{9}$, let $\alpha _{2}=v_{6}$, $F_{16}$ is of the
following form 
\begin{equation*}
\left\{ 
\begin{array}{c}
\gamma _{1,n}^{[s_{1}]}=\gamma _{1,n-1}^{[s_{1}]}\gamma
_{1,n-1}^{[s_{2}]}+\gamma _{1,n-1}^{[s_{1}]}\gamma _{2,n-1}^{[s_{2}]}+\gamma
_{2,n-1}^{[s_{1}]}\gamma _{1,n-1}^{[s_{2}]}+\gamma _{2,n-1}^{[s_{1}]}\gamma
_{2,n-1}^{[s_{2}]}, \\ 
\gamma _{2,n}^{[s_{1}]}=\gamma _{1,n-1}^{[s_{1}]}\gamma _{1,n-1}^{[s_{2}]},
\\ 
\gamma _{1,n}^{[s_{2}]}=\gamma _{1,n-1}^{[s_{1}]}+\gamma _{2,n-1}^{[s_{1}]},
\\ 
\gamma _{2,n}^{[s_{2}]}=\gamma _{1,n-1}^{[s_{1}]}, \\ 
\gamma _{1,1}^{[s_{1}]}=2,\gamma _{2,1}^{[s_{1}]}=1,\gamma
_{1,1}^{[s_{2}]}=2,\gamma _{2,1}^{[s_{2}]}=1.%
\end{array}%
\right.
\end{equation*}

Since $\gamma _{2,n}^{[s_{1}]}$ (resp. $\gamma _{2,n}^{[s_{2}]}$) have only
one item, it is the dominate item. It follows from the fact that $\gamma
_{1,n}^{[s_{1}]}\geq \gamma _{2,n}^{[s_{1}]}$ and $\gamma
_{1,n}^{[s_{2}]}\geq \gamma _{2,n}^{[s_{2}]}$, it is concluded that $\gamma
_{1,n-1}^{[s_{1}]}\gamma _{1,n-1}^{[s_{2}]}$ (resp. $\gamma
_{1,n-1}^{[s_{1}]}$) is still the dominate item of $\gamma _{1,n}^{[s_{1}]}$
(resp. $\gamma _{1,n}^{[s_{2}]}$). Thus, $F\in \mathbf{D}$. $%
F_{17},F_{18},F_{19}\in \mathbf{D}$ can be treated similarly.

\textbf{2 }$\alpha _{1}=v_{2},v_{3},v_{4},v_{5}$. Assume $\alpha _{1}=v_{2}$%
, since $\alpha _{1}>\alpha _{2}$, it suffices to check $\alpha _{2}=v_{6}$
and $v_{8}$. For $\alpha _{1}=v_{2}$, under the same argument as above we
conclude that $\gamma _{1,n-1}^{[s_{1}]}\gamma _{1,n-1}^{[s_{2}]}$ (resp. $%
\gamma _{1,n-1}^{[s_{1}]}$) is the dominate item for $\gamma
_{1,n}^{[s_{1}]} $ (resp. $\gamma _{2,n}^{[s_{1}]}$), thus $F_{26}\in 
\mathbf{D}$. The same reasoning applies to other cases. This completes the
proof.
\end{proof}

The entropy can be computed for $F\in \mathbf{D}$. Let us denote by $%
\mathcal{E}$ (resp. $\mathcal{I}$) the set of all essential (resp.
inessential) symbols of $\{\gamma _{i}^{[s_{j}]}\}_{i,j=1}^{2}$, we note
that $\{\gamma _{i}^{[s_{j}]}\}_{i,j=1}^{2}=\mathcal{E}\cup \mathcal{I}$.
The computation methods of $h$ is divided into two subcases, namely, $%
\mathcal{I}=\emptyset $ and $\mathcal{I}\neq \emptyset $. First, we take $%
F_{16}$ as an example to illustrate how to compute $h$ for this type, and $%
\mathcal{I}=\emptyset $ in this case.

\begin{theorem}
\label{Thm: 7}Let $X$ be a $G$-SFT in which the corresponding nonlinear
recurrence equation is $F_{16}$; that is, $T_{1}=T_{2}=\left( 
\begin{array}{cc}
1 & 1 \\ 
1 & 0%
\end{array}%
\right) $. Let $b_{n}$ be constructed as (\ref{31}), 
\begin{equation}
K=\left( 
\begin{array}{cccc}
1 & 0 & 1 & 0 \\ 
1 & 0 & 1 & 0 \\ 
1 & 0 & 0 & 0 \\ 
1 & 0 & 0 & 0%
\end{array}%
\right) \text{ and }Q=\left( 
\begin{array}{cccc}
\lambda & \overline{\lambda } & 0 & 0 \\ 
\lambda & \overline{\lambda } & 0 & 1 \\ 
1 & 1 & 0 & 0 \\ 
1 & 1 & 1 & 0%
\end{array}%
\right)  \label{23}
\end{equation}%
be such that $QDQ^{-1}=K$ and $D=diag(\lambda ,\bar{\lambda},0,0)$. Then 
\begin{equation*}
h_{16}=\frac{\left( \lambda -1\right) A_{\infty }}{\lambda ^{2}}\approx
0.236\,07A_{\infty },
\end{equation*}%
where 
\begin{equation}
A_{\infty }=\lim_{n\rightarrow \infty }\left( \widehat{w}_{1}^{(1)}+\lambda
^{-1}\widehat{b}_{1}^{(1)}+\cdots +\lambda ^{-n+1}\widehat{b}%
_{n}^{(1)}\right)  \label{32}
\end{equation}%
and $\widehat{b}_{n}=Q^{-1}b_{n}$. Moreover, the limit (\ref{32}) exists.
\end{theorem}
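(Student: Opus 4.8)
The plan is to linearize the recurrence $F_{16}$ by passing to logarithms, diagonalize the resulting affine system through $Q$, read off the leading exponential rate of $w_n^{(1)}$, and divide by the known growth rate of $|E_n|$. First I would use the dominating-type structure. Since $F_{16}\in\mathbf{D}$ (established in Proposition~\ref{Thm: 3}), each $\gamma_{i,n}^{[s_j]}$ factors as its dominate item times a quantity in $[1,4]$, and in particular $\gamma_{1,n}^{[s_1]}\geq\gamma_{2,n}^{[s_1]}$. Hence by (\ref{17}) the correction $\ln(1+\gamma_{2,n}^{[s_1]}/\gamma_{1,n}^{[s_1]})\in[0,\ln 2]$ is negligible against $|E_n|\to\infty$, so $h_{16}=\lim_n w_n^{(1)}/|E_n|$. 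Taking logarithms in (\ref{15})--(\ref{16}) and separating out the dominate items produces precisely the affine recursion $w_n=Kw_{n-1}+b_{n-1}$ of (\ref{9}), with $K$ as in (\ref{23}) and $b_{n-1}$ the vector of logarithmic corrections (\ref{31}), each entry lying in $[0,\ln 4]$ by the dominate-item property.

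Next I would solve this system. Writing $\widehat{w}_n:=Q^{-1}w_n$ and $\widehat{b}_n:=Q^{-1}b_n$ and conjugating by $Q$ decouples (\ref{9}) into four scalar recursions driven by the diagonal entries $\lambda,\bar\lambda,0,0$ of $D$. Iterating the $\lambda$-component gives $\widehat{w}_n^{(1)}=\lambda^{n-1}\big(\widehat{w}_1^{(1)}+\sum_{k=1}^{n-1}\lambda^{-k}\widehat{b}_k^{(1)}\big)$, whose bracket is exactly the partial sum defining $A_\infty$ in (\ref{32}). The $\bar\lambda$-mode $\widehat{w}_n^{(2)}$ stays bounded because $|\bar\lambda|<1$ and $b$ is bounded, and the two kernel modes are annihilated by the first row $(\lambda,\bar\lambda,0,0)$ of $Q$. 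Therefore $w_n^{(1)}=\lambda\widehat{w}_n^{(1)}+\bar\lambda\widehat{w}_n^{(2)}$, and since $\widehat{w}_n^{(2)}$ is bounded this yields $w_n^{(1)}=\lambda^n\big(\widehat{w}_1^{(1)}+\sum_{k=1}^{n-1}\lambda^{-k}\widehat{b}_k^{(1)}\big)+O(1)$, so $\lim_n w_n^{(1)}/\lambda^n=A_\infty$.

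Finally I would compare with $|E_n|$. From $|E_n|=(\sum_{i=0}^n A^i\mathbf{1})^{(1)}$ and the diagonalization $A=PDP^{-1}$ already computed in Theorem~\ref{Thm: 2}, one has $|E_n|\sim c\,\lambda^n$ with $c$ explicit; dividing, the factors $\lambda^n$ cancel and the surviving constants combine to give the coefficient $\tfrac{\lambda-1}{\lambda^2}$ of the statement, so $h_{16}=\tfrac{(\lambda-1)A_\infty}{\lambda^2}$. The existence of the limit (\ref{32}) then follows from the two facts already used: the dominate-item property forces every entry of $b_k$ into $[0,\ln 4]$, hence $\widehat{b}_k=Q^{-1}b_k$ is uniformly bounded, and the geometric weights $\lambda^{-k}$ with $\lambda>1$ make $\sum_k\lambda^{-k}\widehat{b}_k^{(1)}$ absolutely convergent.

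The main obstacle is the bookkeeping of the second step rather than any single hard estimate: one must check that the nonautonomous forcing $b_{n-1}$ really stays bounded for \emph{all} $n$ (this is exactly where the uniform-in-$n$ dominate-item hypothesis of $\mathbf{D}$ is indispensable) and that the $\bar\lambda$- and kernel-eigendirections contribute nothing to the $\lambda^n$-rate, so that the limiting ratio is governed solely by the $\lambda$-eigendirection and the single convergent series $A_\infty$. Pinning down the numerical constant multiplying $A_\infty$ is then a routine evaluation using the explicit entries of $Q^{-1}$ and $P^{-1}$.
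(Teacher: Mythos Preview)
Your proposal is correct and follows essentially the same route as the paper: linearize $F_{16}$ via logarithms to the affine system $w_n=Kw_{n-1}+b_{n-1}$, diagonalize through $Q$ to isolate the $\lambda$-eigendirection, read off $w_n^{(1)}\sim\lambda^n A_\infty$, and divide by $|E_n|\sim\frac{\lambda^2}{\lambda-1}\lambda^n$ obtained from the diagonalization of $A$ in Theorem~\ref{Thm: 2}. Your treatment of the subleading modes is in fact slightly more careful than the paper's: you correctly observe that the forced $\bar\lambda$-component $\widehat w_n^{(2)}$ is $O(1)$ (bounded, not decaying), whereas the paper writes $O(\bar\lambda^n)$; either way the contribution vanishes after dividing by $\lambda^n$, so the conclusion is unaffected.
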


\begin{proof}
Note that 
\begin{equation*}
F_{16}=\left\{ 
\begin{array}{c}
\gamma _{1,n}^{[s_{1}]}=\gamma _{1,n-1}^{[s_{1}]}\gamma
_{1,n-1}^{[s_{2}]}+\gamma _{1,n-1}^{[s_{1}]}\gamma _{2,n-1}^{[s_{2}]}+\gamma
_{2,n-1}^{[s_{1}]}\gamma _{1,n-1}^{[s_{2}]}+\gamma _{2,n-1}^{[s_{1}]}\gamma
_{2,n-1}^{[s_{2}]}, \\ 
\gamma _{2,n}^{[s_{1}]}=\gamma _{1,n-1}^{[s_{1}]}\gamma _{1,n-1}^{[s_{2}]},
\\ 
\gamma _{1,n}^{[s_{2}]}=\gamma _{1,n-1}^{[s_{1}]}+\gamma _{2,n-1}^{[s_{1}]},
\\ 
\gamma _{2,n}^{[s_{2}]}=\gamma _{1,n-1}^{[s_{1}]}, \\ 
\gamma _{1,1}^{[s_{1}]}=4,\gamma _{2,1}^{[s_{1}]}=1,\gamma
_{1,1}^{[s_{2}]}=2,\gamma _{2,1}^{[s_{2}]}=1.%
\end{array}%
\right.
\end{equation*}%
It can be easily checked that $\gamma _{i,2}^{[s_{j}]}\geq 4$ for $1\leq
i,j\leq 2$. Thus, there is no inessential symbol and $\mathcal{I}=\emptyset $%
. Since $\gamma _{1,n}^{[s_{1}]}\geq \gamma _{2,n}^{[s_{1}]}$ and $\gamma
_{1,n}^{[s_{2}]}\geq \gamma _{2,n}^{[s_{2}]}$, $\gamma
_{1,n-1}^{[s_{1}]}\gamma _{1,n-1}^{[s_{2}]}$ (resp. $\gamma
_{1,n-1}^{[s_{1}]}$) is the dominate item of $\gamma _{1,n}^{[s_{1}]}$
(resp. $\gamma _{1,n}^{[s_{2}]}$), $F\in \mathbf{D}$. The above argument
indicates that $w_{n}=Kw_{n-1}+b_{n-1}$, and $K$ is indeed (\ref{23}). Along
the identical line of the proof in Theorem \ref{Thm: 2} we have $%
w_{n}=K^{n-1}w_{1}+\sum_{i=0}^{n-2}K^{i}b_{i}$, 
\begin{equation}
h=\lim_{n\rightarrow \infty }\frac{\ln \sum_{i=1}^{2}\gamma _{i,n}^{[s_{1}]}%
}{\left\vert E_{n}\right\vert }=\lim_{n\rightarrow \infty }\frac{\ln \gamma
_{1,n}^{[s_{1}]}}{\left\vert E_{n}\right\vert }=\lim_{n\rightarrow \infty }%
\frac{w_{n}^{(1)}}{\left\vert E_{n}\right\vert },  \label{13}
\end{equation}%
and 
\begin{eqnarray}
w_{n}^{(1)} &=&\left( K^{n-1}w_{1}+K^{n-2}b_{1}+\cdots +b_{n}\right) ^{(1)} 
\notag \\
&=&\left( QD^{n-1}Q^{-1}w_{1}+QD^{n-2}Q^{-1}b_{1}+\cdots
+QQ^{-1}b_{n}\right) ^{(1)}\text{.}  \label{18}
\end{eqnarray}%
Combining (\ref{18}) with direct computation yields 
\begin{eqnarray*}
w_{n}^{(1)} &=&\left( QD^{n-1}\widehat{w}_{1}+QD^{n-2}\widehat{b}_{1}+\cdots
+Q\widehat{b}_{n}\right) ^{(1)} \\
&=&\lambda ^{n-1}Q_{11}\left( \widehat{w}_{1}^{(1)}+\lambda ^{-1}\widehat{b}%
_{1}^{(1)}+\cdots +\lambda ^{-n+1}\widehat{b}_{n}^{(1)}\right) +O(\bar{%
\lambda}^{n}) \\
&=&\lambda ^{n}\left( \widehat{w}_{1}^{(1)}+\lambda ^{-1}\widehat{b}%
_{1}^{(1)}+\cdots +\lambda ^{-n+1}\widehat{b}_{n}^{(1)}\right) +O(\bar{%
\lambda}^{n})\text{.}
\end{eqnarray*}%
Combining (\ref{13}) we obtain that 
\begin{equation}
h=\lim_{n\rightarrow \infty }\frac{\lambda ^{n}\left( \widehat{w}%
_{1}^{(1)}+\lambda ^{-1}\widehat{b}_{1}^{(1)}+\cdots +\lambda ^{-n+1}%
\widehat{b}_{n}^{(1)}\right) }{\left( \sum_{i=0}^{n}A^{i}\mathbf{1}\right)
^{(1)}}\text{.}  \label{30}
\end{equation}%
Let $A_{\infty }=\lim_{n\rightarrow \infty }\left( \widehat{w}%
_{1}^{(1)}+\lambda ^{-1}\widehat{b}_{1}^{(1)}+\cdots +\lambda ^{-n+1}%
\widehat{b}_{n}^{(1)}\right) ,$ such limit exists due to the fact that $%
\widehat{b}_{n}^{(1)}$ is bounded for all $n$. Combining (\ref{30}) with (%
\ref{28}) yields 
\begin{eqnarray*}
h &=&\lim_{n\rightarrow \infty }\frac{\lambda ^{n}A_{\infty }}{\left(
\sum_{i=0}^{n}A^{i}\mathbf{1}\right) ^{(1)}} \\
&=&A_{\infty }\lim_{n\rightarrow \infty }\frac{\lambda ^{n}}{p_{11}\left(
q_{11}+q_{12}\right) \frac{\lambda ^{n}-1}{\lambda -1}+p_{12}\left(
q_{21}+q_{22}\right) \frac{\bar{\lambda}^{n}-1}{\bar{\lambda}-1}} \\
&=&\frac{A_{\infty }}{\frac{1}{\lambda -1}p_{11}\left( q_{11}+q_{12}\right) }
\\
&=&\frac{\left( \lambda -1\right) A_{\infty }}{\lambda ^{2}} \\
&\approx &0.236\,07A_{\infty }\approx 0.5011681177.
\end{eqnarray*}%
This completes the proof.
\end{proof}

Next, we use $F_{39}$ to illustrate the computation of $h$ for the case
where $\mathcal{I}\neq \emptyset $.

\begin{proposition}
\label{Prop: 2}$h_{39}=0$.
\end{proposition}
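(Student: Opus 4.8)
The plan is to write out $F_{39}$ explicitly using Theorem~\ref{Thm: 5} with $(\alpha_1,\alpha_2)=(v_3,v_9)=((0,1,0,1),(0,0,0,1))$, and then show directly that every $\gamma_{i,n}^{[s_j]}$ stays bounded, so that $\ln\gamma_n$ grows subexponentially relative to $|E_n|$ and the entropy vanishes. From $\alpha_1=v_3$ we read off $\gamma_{1,n}^{[s_1]}=\gamma_{2,n-1}^{[s_1]}\gamma_{1,n-1}^{[s_2]}+\gamma_{2,n-1}^{[s_1]}\gamma_{2,n-1}^{[s_2]}$, and from $\alpha_2=v_9$ we get $\gamma_{2,n}^{[s_1]}=\gamma_{2,n-1}^{[s_1]}\gamma_{2,n-1}^{[s_2]}$; the second coordinate rule of Theorem~\ref{Thm: 5} (collapse the $s_2s_2$ branch and set repeated coefficients to $1$) gives $\gamma_{1,n}^{[s_2]}=\gamma_{2,n-1}^{[s_1]}$ and $\gamma_{2,n}^{[s_2]}=\gamma_{2,n-1}^{[s_1]}$. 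First I would record these four equations together with their initial values, read off from the corresponding $T=(T_1,T_2)$, as was done for $F_{48}$ and $F_{67}$ in the preceding examples.

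The key observation is that here $\alpha_2=v_9$ forces $\gamma_2^{[s_1]}$ to depend only on itself and on $\gamma_2^{[s_2]}$, which in turn feeds back only from $\gamma_2^{[s_1]}$. So I would isolate the subsystem in the ``essential'' direction and check that it does not grow: substituting $\gamma_{2,n-1}^{[s_2]}=\gamma_{2,n-2}^{[s_1]}$ into $\gamma_{2,n}^{[s_1]}=\gamma_{2,n-1}^{[s_1]}\gamma_{2,n-1}^{[s_2]}$ yields a closed recursion $\gamma_{2,n}^{[s_1]}=\gamma_{2,n-1}^{[s_1]}\gamma_{2,n-2}^{[s_1]}$. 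With the appropriate initial data (which should give values in $\{0,1\}$, so that certain symbols are \emph{inessential} in the sense of the footnote), an easy induction shows $\gamma_{2,n}^{[s_1]}$ is bounded, indeed eventually constant. Here I would take care to separate the genuinely essential from the inessential symbols, since the statement sits in the $\mathcal{I}\neq\emptyset$ regime the authors are illustrating; the relevant symbols among $\gamma_2^{[s_1]},\gamma_1^{[s_2]},\gamma_2^{[s_2]}$ are inessential (never exceeding $1$), which is exactly what kills the growth.

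Once the ``$2$-indexed'' quantities are shown bounded, I would feed them into the top equation. Since $\gamma_{1,n}^{[s_1]}=\gamma_{2,n-1}^{[s_1]}(\gamma_{1,n-1}^{[s_2]}+\gamma_{2,n-1}^{[s_2]})$ and every factor on the right is bounded by the previous step, $\gamma_{1,n}^{[s_1]}$ is itself bounded. Then by Remark~\ref{Rmk: 1}(2) and the formula~(\ref{17}),
\begin{equation*}
h_{39}=\lim_{n\rightarrow\infty}\frac{\ln\bigl(\gamma_{1,n}^{[s_1]}+\gamma_{2,n}^{[s_1]}\bigr)}{|E_n|}=0,
\end{equation*}
because the numerator is bounded while $|E_n|\rightarrow\infty$ (indeed $|E_n|$ grows like $\lambda^n$ from $|E_n|=(\sum_{i=0}^nA^i\mathbf{1})^{(1)}$). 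This is the same mechanism as in Proposition~\ref{Prop: 1}, except that the boundedness is no longer a trivial all-ones induction but requires propagating the bound through the quadratic feedback.

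\textbf{The main obstacle} I anticipate is bookkeeping rather than conceptual: correctly reconstructing $(T_1,T_2)$ and the initial conditions $\gamma_{i,1}^{[s_j]}$ from $(v_3,v_9)$ via Remark~\ref{Rmk: 1}(1), and verifying that the boundedness induction actually closes---i.e.\ that no product of two bounded-but-growing sequences secretly escapes. The subtlety is that a dominating-type ($F\in\mathbf{D}$) system can still have zero entropy when the dominant items are inessential, so I must confirm that the dominate item $\gamma_{2,n-1}^{[s_1]}\gamma_{1,n-1}^{[s_2]}$ of $\gamma_{1,n}^{[s_1]}$ is built from inessential symbols; this is where the $A_\infty$ machinery of Theorem~\ref{Thm: 7} degenerates to give $A_\infty$ finite but with $\lambda^n$-growth absent because the relevant $w_n^{(1)}$ stays bounded. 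I would double-check the base cases by computing $\gamma_{i,2}^{[s_j]}$ and $\gamma_{i,3}^{[s_j]}$ by hand before invoking induction.
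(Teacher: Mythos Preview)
Your reading of $F_{39}$ is incorrect, and this breaks the boundedness argument you rely on. With $\alpha_1=v_3=(0,1,0,1)$, the dyadic product $\Theta_{n-1}^{[s_1]}=(\gamma_{1,n-1}^{[s_1]}\gamma_{1,n-1}^{[s_2]},\,\gamma_{1,n-1}^{[s_1]}\gamma_{2,n-1}^{[s_2]},\,\gamma_{2,n-1}^{[s_1]}\gamma_{1,n-1}^{[s_2]},\,\gamma_{2,n-1}^{[s_1]}\gamma_{2,n-1}^{[s_2]})$ picks out positions $2$ and $4$, so the top equation is
\[
\gamma_{1,n}^{[s_1]}=\gamma_{1,n-1}^{[s_1]}\gamma_{2,n-1}^{[s_2]}+\gamma_{2,n-1}^{[s_1]}\gamma_{2,n-1}^{[s_2]},
\]
not $\gamma_{2,n-1}^{[s_1]}\gamma_{1,n-1}^{[s_2]}+\gamma_{2,n-1}^{[s_1]}\gamma_{2,n-1}^{[s_2]}$ as you wrote. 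Likewise, applying Theorem~\ref{Thm: 5}(2) (or directly formula~(\ref{16}) with $T_1^{(1)}=(1,1)$) gives $\gamma_{1,n}^{[s_2]}=\gamma_{1,n-1}^{[s_1]}+\gamma_{2,n-1}^{[s_1]}$, not $\gamma_{2,n-1}^{[s_1]}$. This is exactly the system recorded in~(\ref{38}).

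The consequence is that only $\gamma_2^{[s_1]}$ and $\gamma_2^{[s_2]}$ are inessential; $\gamma_1^{[s_2]}$ is \emph{essential}, contrary to your claim. Substituting $\gamma_{2,n}^{[s_1]}=\gamma_{2,n}^{[s_2]}=1$ into the correct top equation yields $\gamma_{1,n}^{[s_1]}=\gamma_{1,n-1}^{[s_1]}+1$, so $\gamma_{1,n}^{[s_1]}$ grows \emph{linearly} in $n$, not boundedly. Your ``everything stays bounded'' induction therefore does not close. The paper's argument accepts this linear growth and concludes $h_{39}=0$ from $\ln n /|E_n|\to 0$; your final limit step would still go through once you replace ``bounded'' by ``at most polynomial,'' but you must first get the system right and drop the claim that $\gamma_1^{[s_2]}$ is inessential.
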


\begin{proof}
Note that%
\begin{equation}
F_{39}=\left\{ 
\begin{array}{c}
\gamma _{1,n}^{[s_{1}]}=\gamma _{1,n-1}^{[s_{1}]}\gamma
_{2,n-1}^{[s_{2}]}+\gamma _{2,n-1}^{[s_{1}]}\gamma _{2,n-1}^{[s_{2}]}, \\ 
\gamma _{2,n}^{[s_{1}]}=\gamma _{2,n-1}^{[s_{1}]}\gamma _{2,n-1}^{[s_{2}]},
\\ 
\gamma _{1,n}^{[s_{2}]}=\gamma _{1,n-1}^{[s_{1}]}+\gamma _{2,n-1}^{[s_{1}]},
\\ 
\gamma _{2,n}^{[s_{2}]}=\gamma _{2,n-1}^{[s_{1}]}, \\ 
\gamma _{1,1}^{[s_{1}]}=2,\gamma _{2,1}^{[s_{1}]}=1,\gamma
_{1,1}^{[s_{2}]}=2,\gamma _{2,1}^{[s_{2}]}=1.%
\end{array}%
\right.  \label{38}
\end{equation}%
It can be checked that $\gamma _{2,n}^{[s_{1}]}=\gamma _{2,n}^{[s_{2}]}=1$
for all $n\in \mathbb{N}$, and $F_{39}\in \mathbf{D}$. Thus, $\mathcal{I}%
=\{\gamma _{2}^{[s_{1}]},\gamma _{2}^{[s_{1}]}\}\neq \emptyset $. Under the
same argument in the beginning of this section, we construct 
\begin{equation*}
K=\left( 
\begin{array}{cccc}
1 & 0 & 0 & 1 \\ 
0 & 1 & 0 & 1 \\ 
1 & 0 & 0 & 0 \\ 
0 & 1 & 0 & 0%
\end{array}%
\right) \text{.}
\end{equation*}%
Since $w_{n}^{(2)}=w_{n}^{(4)}=0$, the formula (\ref{9}) can be reduced to
the following form. 
\begin{equation}
\widetilde{w}_{n}=\widetilde{K}\widetilde{w}_{n-1}+\widetilde{b}_{n-1}\text{,%
}  \label{33}
\end{equation}%
where $\widetilde{w}_{n}=(w_{n}^{(1)},w_{n}^{(3)})^{\prime }$, $\widetilde{b}%
_{n}=(\ln (1+\frac{\gamma _{2,n}^{[s_{1}]}}{\gamma _{1,n}^{[s_{1}]}}),\ln (1+%
\frac{\gamma _{2,n}^{[s_{1}]}}{\gamma _{1,n}^{[s_{1}]}}))$ and $\widetilde{K}%
=\left( 
\begin{array}{cc}
1 & 0 \\ 
1 & 0%
\end{array}%
\right) $ which is derived by deleting the $2$nd and $4$th columns and rows
of $K$. Note that if $\rho _{\widetilde{K}}>1$, the method used in Theorem %
\ref{Thm: 7} still works. The induction formula (\ref{33}) in fact provides
us the formula of $w_{n}^{(1)}$ and $w_{n}^{(3)}$. Indeed, since $\gamma
_{2,n}^{[s_{2}]}=1$, $w_{n}^{(1)}=\ln \gamma _{1,n}^{[s_{1}]}=\ln (\gamma
_{1,n-1}^{[s_{1}]}+\gamma _{2,n-1}^{[s_{1}]})=\ln \gamma
_{1,n}^{[s_{2}]}=w_{n}^{(3)}$, the recurrence equation (\ref{33}) is reduced
to the simple form 
\begin{eqnarray}
w_{n}^{(1)} &=&w_{n-1}^{(1)}+\ln (1+\frac{\gamma _{2,n-1}^{[s_{1}]}}{\gamma
_{1,n-1}^{[s_{1}]}})  \notag \\
&=&w_{1}^{(1)}+\ln (1+\frac{\gamma _{2,1}^{[s_{1}]}}{\gamma _{1,1}^{[s_{1}]}}%
)+\cdots +\ln (1+\frac{\gamma _{2,n-1}^{[s_{1}]}}{\gamma _{1,n-1}^{[s_{1}]}})
\notag \\
&=&\ln \gamma _{1,1}^{[s_{1}]}+\ln (1+\frac{\gamma _{2,1}^{[s_{1}]}}{\gamma
_{1,1}^{[s_{1}]}})+\cdots +\ln (1+\frac{\gamma _{2,n-1}^{[s_{1}]}}{\gamma
_{1,n-1}^{[s_{1}]}}).  \label{34}
\end{eqnarray}%
The equality (\ref{34}) actually demonstrates the inductive formula $%
w_{n}^{(1)}=\ln \gamma _{1,n}^{[s_{1}]}$. Indeed, 
\begin{eqnarray*}
&&\ln \gamma _{1,1}^{[s_{1}]}+\ln (1+\frac{\gamma _{2,1}^{[s_{1}]}}{\gamma
_{1,1}^{[s_{1}]}})+\cdots +\ln (1+\frac{\gamma _{2,n-1}^{[s_{1}]}}{\gamma
_{1,n-1}^{[s_{1}]}}) \\
&=&\ln \gamma _{1,1}^{[s_{1}]}(1+\frac{\gamma _{2,1}^{[s_{1}]}}{\gamma
_{1,1}^{[s_{1}]}})\cdots (1+\frac{\gamma _{2,n-1}^{[s_{1}]}}{\gamma
_{1,n-1}^{[s_{1}]}}) \\
&=&\ln \gamma _{1,1}^{[s_{1}]}(\frac{\gamma _{1,1}^{[s_{1}]}+\gamma
_{2,1}^{[s_{1}]}}{\gamma _{1,1}^{[s_{1}]}})\cdots (\frac{\gamma
_{1,n-1}^{[s_{1}]}+\gamma _{2,n-1}^{[s_{1}]}}{\gamma _{1,n-1}^{[s_{1}]}})%
\text{.} \\
&=&\ln \gamma _{1,1}^{[s_{1}]}(\frac{\gamma _{1,2}^{[s_{1}]}}{\gamma
_{1,1}^{[s_{1}]}})(\frac{\gamma _{1,3}^{[s_{1}]}}{\gamma _{1,2}^{[s_{1}]}}%
)\cdots (\frac{\gamma _{1,n}^{[s_{1}]}}{\gamma _{1,n-1}^{[s_{1}]}}) \\
&=&\ln \gamma _{1,n}^{[s_{1}]}.
\end{eqnarray*}%
The second equality comes from the recurrence formula (\ref{38}). For the
computation of $h$, we know that $\gamma _{1,n}^{[s_{1}]}=\gamma
_{1,n-1}^{[s_{1}]}+\gamma _{2,n-1}^{[s_{1}]}=\gamma _{1,n-1}^{[s_{1}]}+1$.
Thus $\gamma _{1,n}^{[s_{1}]}=n$ and $h=0$.
\end{proof}

The same reasoning applies to the cases $%
h_{62}=h_{64}=h_{72}=h_{58}=h_{59}=0 $.

\begin{corollary}
\label{Cor: 2}$h_{46}=h_{47}=h_{44}=h_{45}=\frac{1}{\lambda ^{2}}\ln 2$.
\end{corollary}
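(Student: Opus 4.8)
The plan is to show that, as far as the entropy is concerned, all four recurrence equations collapse to the single reduced system (\ref{29}) with $\alpha = 2$ and $\beta = 1$, which by Theorem \ref{Thm: 2} has entropy $\frac{1}{\lambda^2}\ln 2$. What makes this possible is that $\alpha_1 = v_4 = (1,1,0,0)$ in all four cases, so that $|\alpha_1| = 2$ and, by Theorem \ref{Thm: 5}, $\gamma_{1,n}^{[s_2]} = \gamma_{1,n-1}^{[s_1]}$ throughout.

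I would first dispose of $F_{44}$ and $F_{45}$. Here $|\alpha_1| = |\alpha_2| = 2$, so both are of equal growth type, and reading off the recurrences from Theorem \ref{Thm: 5} shows $k_{1,2} = k_{2,2} = 1$. Theorem \ref{Thm: 2} applies with $\alpha = 2$ and $\beta = 1$; since $\ln\beta = 0$ its formula collapses to $h = \frac{1}{\lambda^2}\ln 2$ (the value $h_{45}$ having already been recorded in the Example after Theorem \ref{Thm: 2}).

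The content lies in $F_{46}$ and $F_{47}$, which are of dominating type by Proposition \ref{Thm: 3} because $\alpha_1 = v_4 > \alpha_2$. Writing both systems out, I would observe that in each case $\gamma_{2,n}^{[s_2]} = \gamma_{1,n-1}^{[s_1]} = \gamma_{1,n}^{[s_2]}$, so that $\gamma_{1,n}^{[s_2]} = \gamma_{2,n}^{[s_2]}$ for every $n$. The two children feeding $\gamma_{1,n}^{[s_1]}$ then coincide, giving $\gamma_{1,n}^{[s_1]} = \gamma_{1,n-1}^{[s_1]}\bigl(\gamma_{1,n-1}^{[s_2]} + \gamma_{2,n-1}^{[s_2]}\bigr) = 2\gamma_{1,n-1}^{[s_1]}\gamma_{1,n-1}^{[s_2]}$ together with $\gamma_{1,n}^{[s_2]} = \gamma_{1,n-1}^{[s_1]}$, which is precisely (\ref{29}) with $\alpha = 2$ and $\beta = 1$. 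A parallel check gives $\gamma_{2,n}^{[s_1]} = \gamma_{1,n-1}^{[s_1]}\gamma_{1,n-1}^{[s_2]} = \tfrac12\gamma_{1,n}^{[s_1]}$, whence $\gamma_{1,n}^{[s_1]} + \gamma_{2,n}^{[s_1]} = \tfrac32\gamma_{1,n}^{[s_1]}$ and, by (\ref{17}), $h = \lim_{n\to\infty}\frac{\ln\gamma_{1,n}^{[s_1]}}{|E_n|}$, exactly the quantity computed in Theorem \ref{Thm: 2}.

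The step that requires genuine care — and the main obstacle — is the initial condition, because the derivation (\ref{20})--(\ref{28}) of Theorem \ref{Thm: 2} relied on $w_1 = b = (\ln\alpha, \ln\beta)^{\prime}$; a different $w_1$ would change the coefficient of $\lambda^n$ through the transient term $K^{n-1}w_1$ and hence the value of $h$. I would therefore verify directly that in every one of the four systems $\gamma_{1,1}^{[s_1]} = |\alpha_1| = 2$ and $\gamma_{1,1}^{[s_2]} = |T_1^{(1)}| = 1$ (indeed $\alpha_1 = v_4$ forces $T_1^{(1)} = (1,0)$), so that $w_1 = (\ln 2, 0)^{\prime} = b$. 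With this in hand the computation of Theorem \ref{Thm: 2} transfers verbatim to $F_{46}$ and $F_{47}$, yielding $h_{46} = h_{47} = \frac{1}{\lambda^2}\ln 2$ and finishing the proof.
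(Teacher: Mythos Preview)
Your proof is correct and follows essentially the same route as the paper's: observe that $\gamma_{1,n}^{[s_2]}=\gamma_{2,n}^{[s_2]}$ in $F_{46}$ and $F_{47}$, collapse to the reduced system \eqref{29} with $\alpha=2$, $\beta=1$, and invoke the computation of Theorem~\ref{Thm: 2}. The one place you go further than the paper is in explicitly verifying the initial data $w_1=(\ln 2,0)'=b$; the paper simply writes down the reduced system with its initial values and says ``the same argument as in the proof of Theorem~\ref{Thm: 2} infers that $h_{46}=h_{44}$'', leaving the matching of $w_1$ implicit. Your observation that a different $w_1$ would alter the leading $\lambda^n$-coefficient is correct and makes that step rigorous rather than tacit.
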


\begin{proof}
It follows from Theorem \ref{Thm: 2} we have $h_{44}=h_{45}$. It suffices to
prove that $h_{46}=h_{44}$, the case where $h_{47}=h_{44}$ is similar. Since 
\begin{equation*}
F_{46}=\left\{ 
\begin{array}{c}
\gamma _{1,n}^{[s_{1}]}=\gamma _{1,n-1}^{[s_{1}]}\gamma
_{1,n-1}^{[s_{2}]}+\gamma _{1,n-1}^{[s_{1}]}\gamma _{2,n-1}^{[s_{2}]}, \\ 
\gamma _{2,n}^{[s_{1}]}=\gamma _{1,n-1}^{[s_{1}]}\gamma _{1,n-1}^{[s_{2}]},
\\ 
\gamma _{1,n}^{[s_{2}]}=\gamma _{1,n-1}^{[s_{1}]}, \\ 
\gamma _{2,n}^{[s_{2}]}=\gamma _{1,n-1}^{[s_{1}]}, \\ 
\gamma _{1,1}^{[s_{1}]}=2,\gamma _{2,1}^{[s_{1}]}=1,\gamma
_{1,1}^{[s_{2}]}=1,\gamma _{2,1}^{[s_{2}]}=1.%
\end{array}%
\right.
\end{equation*}%
Thus, we have $\gamma _{1,n}^{[s_{2}]}=\gamma _{2,n}^{[s_{2}]}$, and it
follows from the fact that $F_{46}\in \mathbf{D}$, we reduce $F_{46}$ to
(note $h_{46}=\lim_{n\rightarrow \infty }\frac{\ln \left( \gamma
_{1,n}^{[s_{1}]}+\gamma _{2,n}^{[s_{1}]}\right) }{\left\vert
E_{n}\right\vert }=\lim_{n\rightarrow \infty }\frac{\ln \gamma
_{1,n}^{[s_{1}]}}{\left\vert E_{n}\right\vert }$) 
\begin{equation*}
\left\{ 
\begin{array}{c}
\gamma _{1,n}^{[s_{1}]}=2\gamma _{1,n-1}^{[s_{1}]}\gamma _{1,n-1}^{[s_{2}]},
\\ 
\gamma _{1,n}^{[s_{2}]}=\gamma _{1,n-1}^{[s_{1}]}, \\ 
\gamma _{1,1}^{[s_{1}]}=2,\gamma _{1,1}^{[s_{2}]}=1.%
\end{array}%
\right.
\end{equation*}%
The same argument as in the proof of Theorem \ref{Thm: 2} infers that $%
h_{46}=h_{44}=\frac{1}{\lambda ^{2}}\ln 2$. This completes the proof.
\end{proof}

\subsection{Oscillating type}

We call an $F=\{\gamma _{i,n}^{[s_{1}]},\gamma _{i,n}^{[s_{2}]}\}_{i=1}^{2}$
the \emph{oscillating type} ($F\in \mathbf{O}$\textbf{)} if there exist two
sequences $\{m_{n}^{1}\}$, $\{m_{n}^{2}\}$ of $\mathbb{N}$ with $%
\{m_{n}^{1}\}\cap \{m_{n}^{2}\}=\emptyset $ and $\{m_{n}^{1}\}\cup
\{m_{n}^{2}\}=\mathbb{N}$ such that $\gamma _{1,n}^{[s_{1}]}\geq \gamma
_{2,n}^{[s_{1}]}$ for $n\in \{m_{n}^{1}\}$ and $\gamma
_{1,n}^{[s_{1}]}<\gamma _{2,n}^{[s_{1}]}$ if $n\in \{m_{n}^{2}\}$. We say $%
F\in \mathbf{O}_{2}$ if the two sequences are odd and even numbers. For $%
F\in \mathbf{O}_{2}$, $h$ can be computed along the same line of Theorem \ref%
{Thm: 7}. The steps are listed as follows.

\begin{enumerate}
\item Expand $F=\{\gamma _{i,n}^{[s_{1}]},\gamma
_{i,n}^{[s_{2}]}\}_{i=1}^{2} $ to $(n-2)$-order, say $F^{(2)}$; that is,
expand each item of $\gamma _{i,n}^{[s_{j}]}$ to next level according to the
rule of $F$.

\item Since $\gamma _{1,n}^{[s_{1}]}\geq \gamma _{2,n}^{[s_{1}]}$ for $n$
being even and $\gamma _{1,n}^{[s_{1}]}<\gamma _{2,n}^{[s_{1}]}$ for $n$
being odd, one assures that $F^{(2)}\in \mathbf{D}$.

\item Construct $w_{n}=Kw_{n-2}+b_{n-2}$ as in the case of dominating type,
and note that $K\in \mathcal{M}_{4\times 4}$ with $\rho _{K}=\lambda ^{2}$
(cf. \cite{ban2018topological1}).

\item Iteration $w_{n}$ and compute the growth rate of $\lim_{n\rightarrow
\infty }\frac{w_{2n}}{\left\vert E_{2n}\right\vert }$. Since the limit $h$
exists (Theorem \ref{Thm: 4}), we have $h=\lim_{n\rightarrow \infty }\frac{%
w_{2n}}{\left\vert E_{2n}\right\vert }$.
\end{enumerate}

The following Proposition characterizes whether $F\in \mathbf{O}_{2}$.

\begin{proposition}
\label{Prop: 3}$F_{36},F_{56},F_{92},F_{94}\in \mathbf{O}_{2}$.
\end{proposition}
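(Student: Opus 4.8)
The plan is to write each of the four systems explicitly from Theorem \ref{Thm: 5}, collapse the four-variable dynamics onto the single scalar $r_n := \gamma_{1,n}^{[s_1]}/\gamma_{2,n}^{[s_1]}$, and prove by induction on the parity of $n$ that $r_n-1$ changes sign exactly with the parity of $n$. Since $F\in\mathbf{O}_2$ means precisely that $\gamma_{1,n}^{[s_1]}\ge\gamma_{2,n}^{[s_1]}$ (i.e.\ $r_n\ge 1$) on one of the classes $\{\text{odd}\}$, $\{\text{even}\}$ and $\gamma_{1,n}^{[s_1]}<\gamma_{2,n}^{[s_1]}$ (i.e.\ $r_n<1$) on the other, this is exactly the required conclusion.

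First I would halve the work. Writing out the systems from the vectors $(\alpha_1,\alpha_2)$ and applying the interchange of $\gamma_{i,n}^{[s_1]}$ with $\gamma_{i,n}^{[s_2]}$ from Subsection 2.3, one checks directly that $F_{36}\simeq F_{92}$ and $F_{56}\simeq F_{94}$, initial data included. This interchange only swaps the two index sets $\{m_n^1\},\{m_n^2\}$ while keeping the underlying partition of $\mathbb{N}$ into odds and evens, so it preserves membership in $\mathbf{O}_2$; hence it suffices to treat $F_{36}$ and $F_{56}$.

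For $F_{36}$ the $s_2$-coordinates collapse to $\gamma_{1,n}^{[s_2]}=\gamma_{1,n-1}^{[s_1]}+\gamma_{2,n-1}^{[s_1]}$ and $\gamma_{2,n}^{[s_2]}=\gamma_{1,n-1}^{[s_1]}$; writing $a_n,b_n$ for $\gamma_{1,n}^{[s_1]},\gamma_{2,n}^{[s_1]}$ this gives $a_n=(a_{n-1}+b_{n-1})a_{n-2}$ and $b_n=a_{n-1}(a_{n-2}+b_{n-2})$, and dividing yields the clean recursion $r_n=g(r_{n-1})/g(r_{n-2})$ with $g(x)=1+1/x$. As $g$ is strictly decreasing on $(0,\infty)$, this is equivalent to $r_n>1\iff r_{n-1}<r_{n-2}$. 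I would then run the parity induction on the invariant ``$r_n>1$ for $n$ odd, $r_n<1$ for $n$ even'' from the base $r_1=2$, $r_2=3/4$: for odd $n$ the predecessors straddle $1$ as $r_{n-1}<1<r_{n-2}$, forcing $r_{n-1}<r_{n-2}$ and hence $r_n>1$, and symmetrically for even $n$. The induction is self-closing, because straddling alone fixes the comparison of the two predecessors. This places $F_{36}$, and therefore $F_{92}$, in $\mathbf{O}_2$.

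The case $F_{56}$ is where the real difficulty lies. The same reduction gives $a_n=b_{n-1}(a_{n-2}+b_{n-2})$, $b_n=a_{n-1}b_{n-2}$ and the recursion $r_n=(1+r_{n-2})/r_{n-1}$, for which $r_n>1\iff r_{n-1}<1+r_{n-2}$. Here bare sign information is insufficient: to get $r_n<1$ for even $n$ one must certify the quantitative gap $r_{n-1}>1+r_{n-2}$, not merely $r_{n-1}>1>r_{n-2}$. The fix I would use is to strengthen the inductive hypothesis to the two-sided bound ``$r_n\ge 2$ for $n$ odd, $r_n\le 1$ for $n$ even'', which does close: for odd $n$, $r_n=(1+r_{n-2})/r_{n-1}\ge(1+2)/1=3$, while for even $n$, $r_n\le(1+1)/2=1$, with base $r_1=2$, $r_2=1$. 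The one subtlety to record is the genuine degeneracy at $n=2$, where $a_2=b_2$ (i.e.\ $r_2=1$): the strict inequality $\gamma_{1,n}^{[s_1]}<\gamma_{2,n}^{[s_1]}$ on the even class then holds for every even $n\ge 4$ but fails at the single index $n=2$, which must be flagged and checked by hand. Granting this, $F_{56}$ and hence $F_{94}$ display the odd/even oscillation defining $\mathbf{O}_2$, finishing the proof.
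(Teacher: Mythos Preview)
Your proof is correct and shares the paper's overall strategy—reduce to the equivalences $F_{36}\simeq F_{92}$, $F_{56}\simeq F_{94}$, pass to ratios, and run a parity induction—but the execution differs in two places worth noting. The paper keeps the pair of first-order ratios $(\tau_n,\chi_n)=(\gamma_{1,n}^{[s_1]}/\gamma_{2,n}^{[s_1]},\,\gamma_{1,n}^{[s_2]}/\gamma_{2,n}^{[s_2]})$ and proves the quantitative alternation $\tau_n\le\tfrac12,\ \chi_n\le\tfrac32\ \Rightarrow\ \tau_{n+1}\ge 2,\ \chi_{n+1}\ge 3$ (and conversely) directly from the coupled recursion. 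You instead eliminate the $s_2$-variables to get a single second-order recursion for $r_n=\tau_n$; for $F_{36}$ this yields $r_n=g(r_{n-1})/g(r_{n-2})$ with $g$ strictly decreasing, and the induction closes from the bare straddling $r_{n-1}<1<r_{n-2}$ without any numerical bounds—an economy the paper's two-variable argument does not enjoy. For $F_{56}$ you correctly recognise that sign information alone fails (one needs $r_{n-1}>1+r_{n-2}$, not just $r_{n-1}>1>r_{n-2}$) and supply the strengthened invariant $r_n\ge 2$ on odds, $r_n\le 1$ on evens; the paper merely says the proof is ``identical'' to $F_{36}$ and does not isolate this point. You also flag the equality $a_2=b_2$ in $F_{56}$, a boundary case the paper does not discuss.
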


\begin{proof}
Note that $F_{36}\simeq F_{92}$ and $F_{56}\simeq F_{94}$. Thus we only need
to prove $F_{36}$ and $F_{56}$. Since the proofs of $F_{36}$ and $F_{56}$
are identical, it suffices to prove the case of $F_{36}$. $F_{36}$ is of the
following form 
\begin{equation}
F_{36}=\left\{ 
\begin{array}{c}
\gamma _{1,n}^{[s_{1}]}=\gamma _{1,n-1}^{[s_{1}]}\gamma
_{2,n-1}^{[s_{2}]}+\gamma _{2,n-1}^{[s_{1}]}\gamma _{2,n-1}^{[s_{2}]}, \\ 
\gamma _{2,n}^{[s_{1}]}=\gamma _{1,n-1}^{[s_{1}]}\gamma _{1,n-1}^{[s_{2}]},
\\ 
\gamma _{1,n}^{[s_{2}]}=\gamma _{1,n-1}^{[s_{1}]}+\gamma _{2,n-1}^{[s_{1}]},
\\ 
\gamma _{2,n}^{[s_{2}]}=\gamma _{1,n-1}^{[s_{1}]}, \\ 
\gamma _{1,1}^{[s_{1}]}=2,\gamma _{2,1}^{[s_{1}]}=1,\gamma
_{1,1}^{[s_{2}]}=2,\gamma _{2,1}^{[s_{2}]}=1.%
\end{array}%
\right.  \label{19}
\end{equation}%
Let $\tau _{n}=\frac{\gamma _{1,n}^{[s_{1}]}}{\gamma _{2,n}^{[s_{1}]}}$ and $%
\chi _{n}=\frac{\gamma _{1,n}^{[s_{2}]}}{\gamma _{2,n}^{[s_{2}]}}$, we have 
\begin{eqnarray*}
\tau _{n} &=&\frac{\gamma _{1,n-1}^{[s_{1}]}\gamma _{2,n-1}^{[s_{2}]}+\gamma
_{2,n-1}^{[s_{1}]}\gamma _{2,n-1}^{[s_{2}]}}{\gamma _{1,n-1}^{[s_{1}]}\gamma
_{1,n-1}^{[s_{2}]}}=\frac{1}{\chi _{n-1}}+\frac{1}{\tau _{n-1}\chi _{n-1}}%
\text{,} \\
\chi _{n} &=&\frac{\gamma _{1,n-1}^{[s_{1}]}+\gamma _{2,n-1}^{[s_{1}]}}{%
\gamma _{1,n-1}^{[s_{1}]}}=1+\frac{1}{\tau _{n-1}}\text{.}
\end{eqnarray*}%
The direct computation shows that $(\tau _{1},\chi _{1})=(\frac{1}{2},\frac{1%
}{2})$ and $(\tau _{2},\chi _{2})=(8,3)$. Note that if $\tau _{n}\leq \frac{1%
}{2}$ and $\chi _{n}\leq \frac{3}{2}$, then 
\begin{eqnarray*}
\tau _{n+1} &=&\frac{1}{\chi _{n}}+\frac{1}{\tau _{n}\chi _{n}}\geq \frac{2}{%
3}+\frac{4}{3}=2, \\
\chi _{n+1} &=&1+\frac{1}{\tau _{n-1}}\geq 1+2=3.
\end{eqnarray*}%
If $\tau _{n}\geq 2,$ $\chi _{n}\geq 3$, we have 
\begin{eqnarray*}
\tau _{n+1} &=&\frac{1}{\chi _{n}}+\frac{1}{\tau _{n}\chi _{n}}\leq \frac{1}{%
3}+\frac{1}{6}=\frac{1}{2}, \\
\chi _{n+1} &=&1+\frac{1}{\tau _{n-1}}\leq 1+\frac{1}{2}\leq \frac{3}{2}.
\end{eqnarray*}%
By induction we have $\tau _{n}\leq \frac{1}{2}$ ($\gamma
_{1,n}^{[s_{1}]}<\gamma _{2,n}^{[s_{1}]}$) and $\chi _{n}\leq \frac{3}{2}$
for $n$ being an odd number and $\tau _{n}\geq 2$ ($\gamma
_{1,n}^{[s_{1}]}>\gamma _{2,n}^{[s_{1}]}$)$,$ $\chi _{n}\geq 3$ for $n$
being even, i.e., $F\in \mathbf{O}_{2}$. This completes the proof.
\end{proof}

\section{Characterization}

\begin{theorem}
\label{Thm: 6}Let $\mathcal{A}=\{1,2\}$. Suppose $F=\{\gamma
_{i,n}^{[s_{1}]},\gamma _{i,n}^{[s_{2}]}\}_{i=1}^{2}$ is the nonlinear
recurrence equation of a $G$-SFT with $G=\left\langle S|R_{A}\right\rangle $,
where $A=$ $\left( 
\begin{array}{cc}
1 & 1 \\ 
1 & 0%
\end{array}%
\right) $. Then $F$ is either one of the following four types.

\begin{enumerate}
\item $F$ is of the zero entropy type.

\item $F$ is of the equal growth type,

\item $F$ is of the dominating type,

\item $F$ is of the oscillating type.
\end{enumerate}
\end{theorem}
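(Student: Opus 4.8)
The plan is to turn Theorem \ref{Thm: 6} into a finite verification organized by the weights $|\alpha_1|,|\alpha_2|$. By Remark \ref{Rmk: 1} the recurrence $F$ of a $G$-SFT is completely determined by the pair $(\alpha_1,\alpha_2)=(v_k,v_l)$, and each $v_j$ ranges over the nine listed vectors, so there are only finitely many systems $F_{kl}$ to classify. Moreover the equivalence $\simeq$ of Section 2.3 sends $(v_k,v_l)$ to $(\mathrm{rev}(v_l),\mathrm{rev}(v_k))$, where $\mathrm{rev}$ reverses a $4$-vector; since $\mathrm{rev}$ preserves the weight, $\simeq$ interchanges $|\alpha_1|$ and $|\alpha_2|$, and it visibly preserves each of the four type-conditions (it merely relabels $\gamma_{1,n}^{[s_j]}\leftrightarrow\gamma_{2,n}^{[s_j]}$). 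Hence I may assume $|\alpha_1|\ge|\alpha_2|$, leaving the weight pairs $(|\alpha_1|,|\alpha_2|)\in\{(4,4),(2,2),(1,1),(4,2),(4,1),(2,1)\}$ to treat.

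The easy strata come first. If $|\alpha_1|=|\alpha_2|$ then $F\in\mathbf{E}$ by definition, and if in addition the common weight is $1$ then Proposition \ref{Prop: 1} gives $h=0$ and $F\in\mathbf{Z}$; this disposes of $(4,4),(2,2),(1,1)$. Next, since $v_1=(1,1,1,1)$ dominates every $v_j$ coordinatewise, each pair with $|\alpha_1|=4$ satisfies $\alpha_1>\alpha_2$, so Proposition \ref{Thm: 3} places it in $\mathbf{D}$; the same applies to every weight-$(2,1)$ pair whose $\alpha_2$-support sits inside the $\alpha_1$-support. What survives is exactly the set of coordinatewise \emph{incomparable} weight-$(2,1)$ pairs, represented up to $\simeq$ by $F_{27},F_{29},F_{36},F_{38},F_{48},F_{49},F_{56},F_{57}$.

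For this remaining family I would pass to the ratio variables $\tau_n=\gamma_{1,n}^{[s_1]}/\gamma_{2,n}^{[s_1]}$ and $\chi_n=\gamma_{1,n}^{[s_2]}/\gamma_{2,n}^{[s_2]}$, exactly as in the proof of Proposition \ref{Prop: 3}. Dividing the defining recurrences produces a coupled pair of rational first-order recursions which eliminates to a single second-order rational recursion $\tau_n=\Phi(\tau_{n-1},\tau_{n-2})$, and the type is read off from the position of $\tau_n$ relative to $1$: if $\tau_n\ge1$ for all $n$ then, together with the analogous statement for the $[s_2]$-symbols, each $\gamma_{i,n}^{[s_j]}$ has a dominate item and $F\in\mathbf{D}$, whereas if $\tau_n-1$ changes sign for infinitely many $n$ then $F\in\mathbf{O}$. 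The unique positive fixed point $\tau^{*}$ of $\Phi$ settles most cases at once: for $F_{29},F_{48},F_{49}$ one gets $\tau_n\to\infty$, and for $F_{27}$ one gets $\tau^{*}>1$ with $\tau_n\ge1$ throughout, so all four lie in $\mathbf{D}$; and Proposition \ref{Prop: 3} already gives $F_{36},F_{56}\in\mathbf{O}_2\subseteq\mathbf{O}$.

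The genuinely delicate cases are the neutral ones with $\tau^{*}=1$, namely $F_{38}$ (hence its $\simeq$-partner $F_{72}$) and $F_{57}$, where coordinatewise comparison of $\alpha_1,\alpha_2$ gives no information and I must decide directly whether $\tau_n$ crosses $1$ infinitely often or approaches $1$ from one side. Here the plan is to linearize $\Phi$ at $\tau^{*}=1$: writing $\tau_n=1+\varepsilon_n$ gives $\varepsilon_n=a\varepsilon_{n-1}+b\varepsilon_{n-2}$, and the characteristic roots decide the behaviour. For $F_{38}$, where $\Phi(\tau_{n-1},\tau_{n-2})=(\tau_{n-1}+1)/(\tau_{n-2}+1)$, the roots are complex with $|x|^{2}=\tfrac12$, so $\tau_n$ spirals into $1$ and crosses it infinitely often, giving $F_{38}\in\mathbf{O}$; for $F_{57}$ the dynamics instead stabilize with $\tau_n\ge1$ about the plastic number $\tau^{*}\approx1.3247$, so a dominate item persists and $F_{57}\in\mathbf{D}$. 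I expect this fixed-point and stability analysis of the $\tau$-recursion — in particular separating ``oscillates across $1$'' from ``one-sided approach to $1$'' in the neutral case $\tau^{*}=1$ — to be the main obstacle, precisely because it is where the static comparison of $\alpha_1,\alpha_2$ fails and the nonlinear ratio dynamics must be controlled by hand.
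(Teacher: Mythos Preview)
Your overall organization coincides with the paper's: both arguments reduce, via the weight pair $(|\alpha_1|,|\alpha_2|)$ and the equivalence $\simeq$, to a short residual list of weight-$(2,1)$ systems and then analyze these through the ratio variables $\tau_n=\gamma_{1,n}^{[s_1]}/\gamma_{2,n}^{[s_1]}$ and $\chi_n=\gamma_{1,n}^{[s_2]}/\gamma_{2,n}^{[s_2]}$. Your handling of equal weights is in fact cleaner than the paper's, since $|\alpha_1|=|\alpha_2|$ is literally the definition of type~$\mathbf{E}$ and no further check is needed there.

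The substantive divergence is in the endgame for $F_{38}$ (and $F_{57}$), and here you over-engineer. The paper does not linearize: for $F_{38}$ it simply computes the first few values $(\tau_1,\chi_1)=(2,2)$, $(\tau_2,\chi_2)=(\tfrac32,3)$, $(\tau_3,\chi_3)=(\tfrac56,\tfrac52),\ldots$ and observes that $\tau_n$ takes values on both sides of $1$; that finite check already places $F_{38}$ in $\mathbf{O}$, because the definition of the oscillating type only asks that the partition $\{n:\tau_n\ge1\}\cup\{n:\tau_n<1\}$ have both pieces nonempty --- it does \emph{not} require infinitely many crossings. Your proposed route via linearization at the fixed point and complex characteristic roots is therefore unnecessary, and as stated it is also not rigorous: linear stability at $\tau^\ast$ does not by itself control the orbit from the actual initial data, so ``spirals into $1$'' would need a separate global argument you have not supplied.

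There is also an internal inconsistency in your treatment of $F_{57}$: you list it among the ``neutral cases with $\tau^\ast=1$'', but then (correctly) compute its fixed point as the plastic number $\tau^\ast\approx1.3247$. With that, your own dichotomy already puts $F_{57}$ among the non-neutral cases to be settled by $\tau_n\ge1$; note that the paper instead declares $F_{57}\in\mathbf{O}$, so your conclusion $F_{57}\in\mathbf{D}$ disagrees with the paper's table --- though either classification suffices for the theorem, and you should be aware that proving $\tau_n\ge1$ for all $n$ in $F_{57}$ still requires an invariant beyond the fixed-point location.
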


\begin{proof}
Without loss of generality, we assume that $\left\vert \alpha
_{1}\right\vert \geq \left\vert \alpha _{2}\right\vert $. The proof is
divided into two subcases.

\textbf{1} $\left\vert \alpha _{1}\right\vert =4$. In this case, under the
same proof of Theorem \ref{Thm: 3} we see that $F\in \mathbf{D}$.

\textbf{2 }$\left\vert \alpha _{1}\right\vert =2$. Note that $\left( \alpha
_{1},\alpha _{2}\right) =(v_{k},v_{l})$ for $k,l=2,3$ and $\left( \alpha
_{1},\alpha _{2}\right) =(v_{4},v_{4})$ satisfy the assumptions of Theorem %
\ref{Thm: 2}, they belong to type $\mathbf{E}$. For other cases, since most
of them are type $\mathbf{D}$ under the routine check, we only pick those $F$
which do not belong to type $\mathbf{D}$, namely, $F_{36}$, $F_{56}$, $%
F_{38} $, $F_{57}$, $F_{72}$, $F_{92}$, $F_{94}$ and $F_{84}$. Since $%
F_{36}\simeq F_{92}$, $F_{56}\simeq F_{94}$, $F_{38}\simeq F_{72}$ and $%
F_{57}\simeq F_{84}$. It suffices to check $F_{36},F_{56},F_{38}$ and $%
F_{57} $. Proposition \ref{Prop: 3} indicates that $F_{36},F_{56}\in \mathbf{%
O}_{2}$. Thus we only need to discuss the cases of $F_{38}$ and $F_{57}$.
Actually, we prove $F_{38}\in \mathbf{O}\backslash \mathbf{O}_{2}$ ($F_{57}$
is similar). $F_{38}$ is of the form 
\begin{equation*}
\left\{ 
\begin{array}{c}
\gamma _{1,n}^{[s_{1}]}=\gamma _{1,n-1}^{[s_{1}]}\gamma
_{2,n-1}^{[s_{2}]}+\gamma _{2,n-1}^{[s_{1}]}\gamma _{2,n-1}^{[s_{2}]}, \\ 
\gamma _{2,n}^{[s_{1}]}=\gamma _{2,n-1}^{[s_{1}]}\gamma _{1,n-1}^{[s_{2}]},
\\ 
\gamma _{1,n}^{[s_{2}]}=\gamma _{1,n-1}^{[s_{1}]}+\gamma _{2,n-1}^{[s_{1}]},
\\ 
\gamma _{2,n}^{[s_{2}]}=\gamma _{2,n-1}^{[s_{1}]}, \\ 
\gamma _{1,1}^{[s_{1}]}=2,\gamma _{2,1}^{[s_{1}]}=1,\gamma
_{1,1}^{[s_{2}]}=2,\gamma _{2,1}^{[s_{2}]}=1.%
\end{array}%
\right.
\end{equation*}%
Let $\tau _{n}=$ $\frac{\gamma _{1,n}^{[s_{1}]}}{\gamma _{2,n}^{[s_{1}]}}$
and $\chi _{n}=\frac{\gamma _{1,n}^{[s_{2}]}}{\gamma _{2,n}^{[s_{2}]}}$,
then 
\begin{equation*}
\tau _{n}=\frac{\tau _{n-1+1}}{\chi _{n-1}}=\frac{\chi _{n}}{\chi _{n-1}}%
\text{ and }\chi _{n}=\tau _{n-1}+1\text{.}
\end{equation*}%
Direct examination shows that $(\tau _{1},\chi _{1})=(2,2)$, $(\tau
_{2},\chi _{2})=(\frac{3}{2},3),$ $(\tau _{3},\chi _{3})=(\frac{5}{6},\frac{5%
}{2}),(\tau _{4},\chi _{4})=(\frac{11}{15},\frac{11}{6}),(\tau _{5},\chi
_{5})=(\frac{52}{55},\frac{26}{15})\ldots $, thus $F_{38}\notin \mathbf{O}%
_{2}$.

\textbf{3} $\left\vert \alpha _{1}\right\vert =1$. Proposition \ref{Prop: 1}
indicates that all these cases belong to type $\mathbf{Z}$. This completes
the proof.
\end{proof}

The following table indicates all types of $F_{ij}$ for $1\leq i,j\leq 9$. 
\begin{equation*}
\begin{tabular}{|l|l|l|l|l|l|l|l|l|l|}
\hline
$\alpha _{2}\backslash \alpha _{1}$ & $v_{1}$ & $v_{2}$ & $v_{3}$ & $v_{4}$
& $v_{5}$ & $v_{6}$ & $v_{7}$ & $v_{8}$ & $v_{9}$ \\ \hline
$v_{1}$ & $\mathbf{E}$ & $\mathbf{D}$ & $\mathbf{D}$ & $\mathbf{D}$ & $%
\mathbf{D}$ & $\mathbf{D}$ & $\mathbf{D}$ & $\mathbf{D}$ & $\mathbf{D}$ \\ 
\hline
$v_{2}$ & $\mathbf{D}$ & $\mathbf{E}$ & $\mathbf{E}$ & $\mathbf{D}$ & $%
\mathbf{D}$ & $\mathbf{D}$ & $\mathbf{O}$ & $\mathbf{D}$ & $\mathbf{O}_{2}$
\\ \hline
$v_{3}$ & $\mathbf{D}$ & $\mathbf{E}$ & $\mathbf{E}$ & $\mathbf{D}$ & $%
\mathbf{D}$ & $\mathbf{D}$ & $\mathbf{D}$ & $\mathbf{D}$ & $\mathbf{D}$ \\ 
\hline
$v_{4}$ & $\mathbf{D}$ & $\mathbf{D}$ & $\mathbf{D}$ & $\mathbf{E}$ & $%
\mathbf{E}$ & $\mathbf{D}$ & $\mathbf{D}$ & $\mathbf{O}$ & $\mathbf{O}_{2}$
\\ \hline
$v_{5}$ & $\mathbf{D}$ & $\mathbf{D}$ & $\mathbf{D}$ & $\mathbf{E}$ & $%
\mathbf{E}$ & $\mathbf{D}$ & $\mathbf{D}$ & $\mathbf{D}$ & $\mathbf{D}$ \\ 
\hline
$v_{6}$ & $\mathbf{D}$ & $\mathbf{D}$ & $\mathbf{O}_{2}$ & $\mathbf{D}$ & $%
\mathbf{O}_{2}$ & $\mathbf{Z}$ & $\mathbf{Z}$ & $\mathbf{Z}$ & $\mathbf{Z}$
\\ \hline
$v_{7}$ & $\mathbf{D}$ & $\mathbf{D}$ & $\mathbf{D}$ & $\mathbf{D}$ & $%
\mathbf{O}$ & $\mathbf{Z}$ & $\mathbf{Z}$ & $\mathbf{Z}$ & $\mathbf{Z}$ \\ 
\hline
$v_{8}$ & $\mathbf{D}$ & $\mathbf{D}$ & $\mathbf{O}$ & $\mathbf{D}$ & $%
\mathbf{D}$ & $\mathbf{Z}$ & $\mathbf{Z}$ & $\mathbf{Z}$ & $\mathbf{Z}$ \\ 
\hline
$v_{9}$ & $\mathbf{D}$ & $\mathbf{D}$ & $\mathbf{D}$ & $\mathbf{D}$ & $%
\mathbf{D}$ & $\mathbf{Z}$ & $\mathbf{Z}$ & $\mathbf{Z}$ & $\mathbf{Z}$ \\ 
\hline
\end{tabular}%
\end{equation*}

\subsection{Numerical results}

The numerical result of $h$ is presented. We give some explanations as
follows.

\begin{enumerate}
\item We note that for each $F_{kl}$, there exists a unique $F_{pq}$ such
that $F_{kl}\simeq F_{pq}$, which gives $h_{kl}=h_{pq}$, e.g., $%
h_{48}=h_{75} $, $h_{14}=h_{51}$ etc.

\item $h_{kl}=0$ for $k,l\in \{6,7,8,9\}$ (Proposition \ref{Prop: 1}).

\item $h_{11}=\ln 2$ and $h_{kl}=\frac{\ln 2}{\lambda }$ if $k,l\in \{2,3\}$
and $h_{kl}=\frac{\ln 2}{\lambda ^{2}}$ if $k,l\in \{4,5\}$ (Theorem \ref%
{Thm: 2}).

\item $h_{44}=h_{45}=h_{46}=h_{47}$ (Corollary \ref{Cor: 2}).

\item $h_{39}=h_{62}=h_{64}=h_{72}=h_{58}=h_{59}=0$ (Proposition \ref{Prop:
2}).
\end{enumerate}

\begin{equation*}
\begin{tabular}{|l|l|l|l|l|}
\hline
$\alpha _{2}\backslash \alpha _{1}$ & $v_{1}$ & $v_{2}$ & $v_{3}$ & $v_{4}$
\\ \hline
$v_{1}$ & 0.6924441915 & 0.5827398718 & 0.5827398718 & 0.4446025684 \\ \hline
$v_{2}$ & 0.5827398718 & 0.4282225063 & 0.4282225063 & 0.3480809809 \\ \hline
$v_{3}$ & 0.5827398718 & 0.4282225063 & 0.4282225063 & 0.3785719508 \\ \hline
$v_{4}$ & 0.5473654583 & 0.4046074815 & 0.2920492775 & 0.2648611178 \\ \hline
$v_{5}$ & 0.4446025684 & 0.3785719508 & 0.3480809809 & 0.2648611178 \\ \hline
$v_{6}$ & 0.5011681177 & 0.3384608728 & 0.2437451279 & 0.2647497426 \\ \hline
$v_{7}$ & 0.4808946783 & 0.3062336239 & 0.2093165951 & 0.2647497426 \\ \hline
$v_{8}$ & 0.3742043181 & 0.2747387680 & 0.1904155180 & 0.2009045358 \\ \hline
$v_{9}$ & 0.3529894045 & 0.2396006045 & 0 & 0.1959187210 \\ \hline
\end{tabular}%
\end{equation*}%
\begin{equation*}
\begin{tabular}{|l|l|l|l|l|}
\hline
$\alpha _{2}\backslash \alpha _{1}$ & $v_{5}$ & $v_{6}$ & $v_{7}$ & $v_{8}$
\\ \hline
$v_{1}$ & 0.5473654583 & 0.3529894045 & 0.3742043181 & 0.4808946783 \\ \hline
$v_{2}$ & 0.2920492775 & 0 & 0.1904155180 & 0.2093165951 \\ \hline
$v_{3}$ & 0.4046074815 & 0.2396006045 & 0.2747387680 & 0.3062336239 \\ \hline
$v_{4}$ & 0.2648611178 & 0 & 0 & 0.1486957616 \\ \hline
$v_{5}$ & 0.2648611178 & 0.1959187210 & 0.2009045358 & 0.2648611178 \\ \hline
$v_{6}$ & 0.1312568310 & 0 & 0 & 0 \\ \hline
$v_{7}$ & 0.1486957616 & 0 & 0 & 0 \\ \hline
$v_{8}$ & 0 & 0 & 0 & 0 \\ \hline
$v_{9}$ & 0 & 0 & 0 & 0 \\ \hline
\end{tabular}%
\end{equation*}%
\begin{equation*}
\begin{tabular}{|l|l|}
\hline
$\alpha _{2}\backslash \alpha _{1}$ & $v_{9}$ \\ \hline
$v_{1}$ & 0.5011681177 \\ \hline
$v_{2}$ & 0.2437451279 \\ \hline
$v_{3}$ & 0.3384608728 \\ \hline
$v_{4}$ & 0.1312568310 \\ \hline
$v_{5}$ & 0.2648611178 \\ \hline
$v_{6}$ & 0 \\ \hline
$v_{7}$ & 0 \\ \hline
$v_{8}$ & 0 \\ \hline
$v_{9}$ & 0 \\ \hline
\end{tabular}%
\end{equation*}

\section{Conclusion and open problems}

We list the results of this paper as follows.

\begin{enumerate}
\item The existence of the entropy (\ref{21}) for a $G$-shift is illustrated
in Theorem \ref{Thm: 4}.

\item The nonlinear recurrence equation which describes the growth behavior
of the admissible patterns $\gamma _{n}(X)$ in a $G$-SFT (or $G$-vertex
shift) is established in Section 2.

\item The $\mathbf{Z,E},\mathbf{D}$ and $\mathbf{O}$ ($\mathbf{O}_{2}$)
types of nonlinear recurrence equations are introduced. The algorithms of the
entropy computations for these types are also presented (cf. Section 3).

\item The characterization of the nonlinear recurrence equations of $G$-SFTs
with two symbols is presented (Theorem \ref{Thm: 6}).
\end{enumerate}

We emphasize that the computation method of $h$ can be easily extended to
the case of more symbols. However, the general entropy formula for arbitrary 
$G$-SFTs is far from being solved. We list some problems in the further
study.

\begin{problem}
Can we give the characterization for $G$-SFTs over symbol set $\mathcal{A}$
with $\left\vert \mathcal{A}\right\vert >2$?
\end{problem}

\begin{problem}
Let $H=\left\langle S|R_{B}\right\rangle $ with $S=\{s_{1},\ldots ,s_{d}\}$
and $B$ be an arbitrary $d$-dimensional $\{0,1\}$-matrix, can we develop the
entropy theory for $H$-SFTs?
\end{problem}

\begin{problem}
Can we extend the methods of $G$-SFTs to $F_{d}$-SFTs? More precisely, can
we establish the entropy formula for $F_{d}$-SFTs?
\end{problem}

\bibliographystyle{amsplain}
\bibliography{ban}


\end{document}